\let\MYcaption\@makecaption
\let\@makecaption\MYcaption
\def\BibTeX{{\rm B\kern-.05em{\sc i\kern-.025em b}\kern-.08em
T\kern-.1667em\lower.7ex\hbox{E}\kern-.125emX}}
\theoremstyle{plain}
\newtheorem{lemma}{Lemma}
\newtheorem{corollary}{Corollary}
\theoremstyle{definition}
\def\({\left(}
\def\){\right)}
\def\[{\left[}
\def\]{\right]}
\def\abf{{\bf a}}
\def\bbf{{\bf b}}
\def\dbf{{\bf d}}
\def\Fbf{{\bf F}}  
\def\Gbf{{\bf G}}  
\def\Kbf{{\bf K}}  
\def\Mbf{{\bf M}}  
\def\Nbf{{\bf N}}  
\def\Pbf{{\bf P}}  
\def\Qbf{{\bf Q}}  
\def\Rbf{{\bf R}}  
\def\Sbf{{\bf S}}  
\def\Tbf{{\bf T}}  
\def\Ubf{{\bf U}}  \def\ubf{{\bf u}}
\def\Vbf{{\bf V}}  
\def\Wbf{{\bf W}}  \def\wbf{{\bf w}}
\def\xbf{{\bf x}}
\def\Ybf{{\bf Y}}  \def\ybf{{\bf y}}
\def\Zbf{{\bf Z}}  \def\zbf{{\bf z}}
\def\Deltabf{\bm{\Delta}}  \def\deltabf{\bm{\delta}}
\def\etabf{{\bm{\eta}}}
\def\Phibf{\bm{\Phi}}
\def\Psibf{\bm{\Psi}}
\def\Ccal{\mathcal{C}}  
\def\Dcal{\mathcal{D}}  
\def\Hcal{\mathcal{H}}  
\def\Kcal{\mathcal{K}}  
\def\Rcal{\mathcal{R}}  
\newif\ifshowWriterComment
\newcommand\writercomment[3]{\expandafter}
\newcommand{\hatSFpair}{\{\hat{\Phibf}_\xbf, \hat{\Phibf}_\ubf\}}
\newcommand{\hatOFqple}{\{\hat{\Phibf}_{\xbf\xbf}, \hat{\Phibf}_{\ubf\xbf}, \hat{\Phibf}_{\xbf\ybf}, \hat{\Phibf}_{\ubf\ybf}\}}
\newcommand{\norm}[1]{\left\lVert #1 \right\rVert}
\def\mat#1{\begin{bmatrix}#1\end{bmatrix}}
\def\cor#1{Corollary~\ref{cor:#1}}
\def\fig#1{Fig.~\ref{fig:#1}}
\def\subfig#1#2{Fig.~\ref{fig:#1}(\subref{subfig:#1-#2})}
\def\lem#1{Lemma~\ref{lem:#1}}
\def\sec#1{Section~\ref{sec:#1}}
\def\eqn#1{\eqref{eqn:#1}}
\def\st{{\rm s.t.}}
\def\OptConsSep{&&\quad}
\newcommand\OptCons[3]{
&\ #1
\ifx\\#2\\ \else \OptConsSep #2 \fi%
\ifx\\#3\\ \nonumber \else \label{eqn:#3} \fi%
}
\newcommand{\OptMinN}[2]{
\begin{alignat*}{2}
\min\ &\ #1 \\
\st\ #2
\end{alignat*}
}
\def\xx{\xbf\xbf}  \def\xu{\xbf\ubf}  \def\xy{\xbf\ybf}
\def\ux{\ubf\xbf}  \def\uu{\ubf\ubf}  \def\uy{\ubf\ybf}
\def\yx{\ybf\xbf}  \def\yu{\ybf\ubf}  \def\yy{\ybf\ybf}
\def\Rtru{\Rbf(\Deltabf)}
\def\Stru{\Sbf(\Deltabf)}
\def\Rnm{\hat{\Rbf}}
\def\Snm{\hat{\Sbf}}
\title{\LARGE \bf A General Approach to Robust Controller Analysis and Synthesis}
\author{Shih-Hao Tseng
\thanks{Shih-Hao Tseng is with the Division of Engineering and Applied Science, California Institute of Technology, Pasadena, CA 91125, USA.  Email: {\tt\small shtseng@caltech.edu}}
}
\begin{document}

\maketitle
\thispagestyle{empty}
\pagestyle{empty}

\bstctlcite{IEEE_BSTcontrol}

\begin{abstract}
Robust controller synthesis attracts reviving research interest, driven by the rise of learning-based systems where uncertainty and perturbation are ubiquitous. Facing an uncertain situation, a robustly stabilizing controller should maintain stability while operating under a perturbed system deviating from its nominal specification. There have been numerous results for robust controller synthesis in multiple forms and with various goals, including $\mu$-synthesis, robust primal-dual Youla, robust input-output, and robust system level parameterizations. However, their connections with one another are not clear, and we lack a general approach to robust controller analysis and synthesis.

To serve this purpose, we derive robust stability conditions for general systems and formulate the general robust controller synthesis problem. The conditions hinge on the realization-stability lemma, a recent analysis tool that unifies existing controller synthesis methods.
Not only can the conditions infer a wide range of existing robust results, but they also lead to easier derivations of new ones. Together, we demonstrate the effectiveness of the conditions and provide a unified approach to robust controller analysis and synthesis.

\end{abstract}

\section{Introduction}\label{sec:introduction}

Robust controller synthesis is an old topic with renewed interest, especially after the recent flourishing of learning-based controllers \cite{tsiamis2020sample, dean2020robust}.
In essence, it studies how the synthesized controller can still stabilize the plant in the presence of system \emph{perturbation}, also referred to as \emph{uncertainty}. For model-free controller synthesis tasks like learning-based data-driven control, such uncertainty is fundamental and inevitable as the plant is uncertain and subject to exploration. As a result, a robust controller is crucial to ensure stability while the system is learning.

As shown in the well-celebrated paper \cite{doyle1978guaranteed}, robustness cannot be inherited for free from simply optimizing some objective. Instead, one would need to explicitly enforce robustness constraints. As to what robustness constraints should be enforced, is itself an active area of research with fruitful results in diverse forms \cite{doyle1982analysis, doyle1985structured, zheng2020sample, matni2017Scalable,anderson2019system,boczar2018finite}.
Although robust results on controller synthesis vary in their forms, they usually focus on mitigating one of the following two concerns -- plant uncertainty or controller perturbation.

Due to the estimation precision, dimension limit, or pliant nature, the plant model could differ from its true dynamics. To deal with this uncertainty, one category of robust results aims to synthesize a controller that can stabilize a set of plants, such as $\mu$-synthesis \cite{doyle1982analysis, doyle1985structured, zhou1998essentials}, robust primal-dual Youla parameterization \cite{niemann2002reliable}, and robust input-output parameterization (IOP) \cite{zheng2020sample}.
On the other hand, even with an exact plant model, realization of the synthesized controller may still deviate from its desired form because of resolution restriction. As such, another category of robust results ensures a perturbed controller realization can still stabilize the plant, e.g., robust system level synthesis (SLS) \cite{matni2017Scalable,anderson2019system,boczar2018finite}.

These results are derived via various analysis tools targeting distinct settings. It is not straightforward to see how they relate with one another and how one method may be applicable for a different setting. As a consequence, most robust results are taught, learned, and applied in a case-by-case manner. Moreover, in addition to the two concerns above, one can easily imagine some compound scenarios where both the plant and controller are subject to perturbation. We would then wonder how to deal with diverse perturbation scenarios systematically. In particular, we are interested in a unified approach to robust controller analysis and synthesis.

\subsection{Contributions and Organization}

This paper provides such a unified approach through the robust stability conditions for general systems. The approach is built upon the \emph{realization} abstraction, which describes a (closed-loop) system by the linear maps among its internal signals.\footnote{We adopt the terminology in \cite{tseng2020deployment,tsengsubsynthesis} that distinguishes ``realizations'' from ``implementations,'' where the former refers to the block diagrams (mathematical expressions) and the latter is reserved for the physical architecture consisting of computation, memory, and communication units.}
As both plant and controller are included in a realization, the uncertainty to any of them is deemed a perturbation of the realization, thereby unifying diverse robustness concerns into one coherent form.
The realization-centric perspective also allows us to apply the \emph{realization-stability lemma} \cite{tseng2021realization} to derive the robust stability condition for general perturbations. We also specialize our result for additively perturbed realizations. The robust stability condition leads to the formulation of the general robust controller synthesis problem, and we demonstrate how to derive existing results in the literature using the condition. In addition, we show how new robust results for output-feedback SLS and IOP can be obtained easily from the condition.

The paper is organized as follows. We first briefly review the realization-stability lemma and its accompanying transformation technique in~\sec{R-S_lemma}, which allows us to derive robust stability conditions and formulate the general robust controller synthesis problem in~\sec{robust_synthesis}.
Leveraging the conditions derived in~\sec{robust_synthesis}, we show in \sec{existing_results} how to unify existing results, including $\mu$-synthesis \cite{doyle1982analysis, doyle1985structured, zhou1998essentials}, robust primal-dual Youla parameterization \cite{niemann2002reliable}, robust IOP \cite{zheng2020sample}, and robust SLS \cite{matni2017Scalable,anderson2019system,boczar2018finite}. Further, we demonstrate in~\sec{robust_of_SLS} how the results in~\sec{robust_synthesis} allow us to present new robust results for output-feedback SLS and IOP. Although our approach is promising in unifying many existing results, there are still some robust results in the literature beyond our scope. Thus, we discuss how this work relates with other robust results in~\sec{discussion}. Finally, we conclude the paper in~\sec{conclusion}.

\subsection{Notation}

\def\Rp{\Rcal_{p}}
\def\RHinf{\Rcal\Hcal_{\infty}}

Let $\Rp$
and $\RHinf$ denote the set of proper
and stable proper transfer matrices, respectively, all defined according to the underlying setting, continuous or discrete.
Lower- and upper-case letters (such as $x$ and $A$) denote vectors and matrices respectively, while bold lower- and upper-case characters and symbols (such as $\ubf$ and $\Rbf$) are reserved for signals and transfer matrices. We denote by $I$ and $O$ the identity and all-zero matrices (with dimensions defined according to the context).

\section{Realization-Stability Lemma}\label{sec:R-S_lemma}
We briefly summarize the realization-stability lemma \cite{tseng2021realization} below, which suggests that the product of the realization and internal stability matrices of a system is identity. To begin with, we consider a closed-loop linear system with internal state $\etabf$ and external disturbance $\dbf$. The internal state $\etabf$ includes \emph{all} signals in the system. As such, instability of the system can be inferred from the unboundedness of $\etabf$.

The realization and internal stability matrices characterize the linear system through closed-loop and open-loop relationships between $\etabf$ and $\dbf$.
The \emph{realization matrix} $\Rbf$ captures the linear relationships among the internal signals. With additive external disturbance, as shown in \subfig{perturbed-realization}{real-system}, $\Rbf$ is defined as
\begin{align}
\etabf = \Rbf \etabf + \dbf.
\label{eqn:R}
\end{align}
On the other hand, treating $\dbf$ as the input and $\etabf$ the output, the \emph{internal stability matrix} (or \emph{stability matrix} for short) $\Sbf$ describes the system from an open-loop perspective by
\begin{align}
\etabf = \Sbf \dbf.
\label{eqn:S}
\end{align}

\begin{figure}
\centering
\subcaptionbox{True realization $\Rtru$.\label{subfig:perturbed-realization-real-system}}{\includegraphics{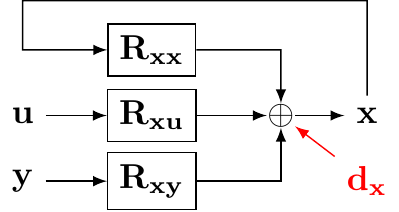}}\hfill
\subcaptionbox{Realization perturbed by $\Deltabf$.\label{subfig:perturbed-realization-general-perturbation}}{\includegraphics{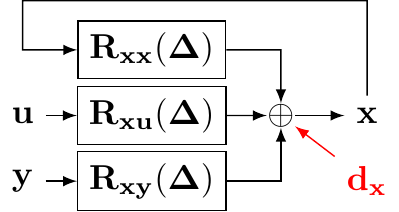}}\\
\vspace*{0.5\baselineskip}
\subcaptionbox{Additive Perturbation.\label{subfig:perturbed-realization-additive-perturbation}}{\includegraphics{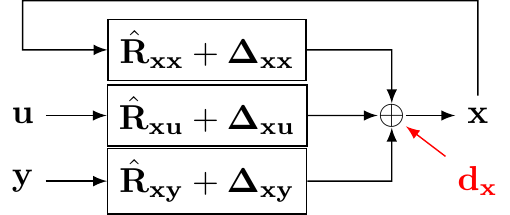}}
\caption{The realization matrix $\Rbf$ describes each signal as a linear combination of the signals in the closed-loop system and the external disturbance $\dbf$.
We denote by $\Rtru$ the realization matrix under perturbation $\Deltabf$ and by $\Rnm$ the nominal realization matrix without perturbation. In the following figures of realizations, we omit drawing the additive disturbance $\dbf$ for simplicity.}
\label{fig:perturbed-realization}
\end{figure}

$\Rbf$ and $\Sbf$ are related through the realization-stability lemma below derived in \cite{tseng2021realization}.
\begin{lemma}[Realization-Stability]\label{lem:R-S}
Let $\Rbf$ be the realization matrix and $\Sbf$ be the internal stability matrix, we have
\begin{align*}
(I - \Rbf) \Sbf = \Sbf(I - \Rbf) = I.
\end{align*}
\end{lemma}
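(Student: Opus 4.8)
The plan is to extract the relationship between $\Rbf$ and $\Sbf$ directly from their defining equations~\eqn{R} and~\eqn{S}, and then argue that well-posedness of the realization promotes the resulting one-sided identity to the claimed two-sided one.

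First I would collect $\etabf$ on the left in the realization equation~\eqn{R}, rewriting $\etabf = \Rbf\etabf + \dbf$ as
\begin{align*}
(I - \Rbf)\etabf = \dbf.
\end{align*}
Substituting the stability relation~\eqn{S}, $\etabf = \Sbf\dbf$, then yields $(I - \Rbf)\Sbf\dbf = \dbf$. Since $\dbf$ is the free external disturbance and ranges over the entire disturbance space, the ``holds for all inputs'' principle forces the coefficient transfer matrices to agree, giving the left identity $(I - \Rbf)\Sbf = I$.

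The step I expect to be the crux is promoting this to a genuine two-sided inverse. The realization matrix $\Rbf$ records how each internal signal is assembled from the others, and these dependencies are causal: under the natural ordering of the signals, $\Rbf$ has no instantaneous self-loops — its diagonal blocks are strictly proper (or, equivalently for the relevant orderings, $\Rbf$ is strictly triangular) — so $I - \Rbf$ is invertible in $\Rp$ with inverse given by the convergent Neumann series $\sum_{k \ge 0}\Rbf^{k}$. This is precisely the well-posedness of the system, and it guarantees that $\etabf$ is \emph{uniquely} determined by $\dbf$. Consequently the left identity forces $\Sbf = (I - \Rbf)^{-1}$, from which $\Sbf(I - \Rbf) = I$ follows immediately, completing the chain $(I - \Rbf)\Sbf = \Sbf(I - \Rbf) = I$.

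Alternatively, I could reach the right identity symmetrically by substituting $\dbf = (I - \Rbf)\etabf$ into~\eqn{S} to obtain $\etabf = \Sbf(I - \Rbf)\etabf$ for every reachable state $\etabf$; the only care required is again the well-posedness argument ensuring the quantification over signals cancels down to a pure matrix equation. Either route isolates the same obstacle — invertibility of $I - \Rbf$ — as the single nontrivial ingredient, with everything else reducing to substitution and matching coefficients.
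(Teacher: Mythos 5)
Your first step --- substituting $\etabf = \Sbf\dbf$ into $(I-\Rbf)\etabf = \dbf$ and cancelling the arbitrary disturbance $\dbf$ to obtain $(I-\Rbf)\Sbf = I$ --- is sound, and it is essentially all the paper itself relies on: the lemma is imported from the cited reference rather than reproved here, and it is explicitly framed as asserting the relation \emph{whenever both matrices exist}, which is exactly the hypothesis your substitution uses. The genuine gap is in your handling of what you correctly identify as the crux, the promotion to a two-sided identity. You justify invertibility of $I-\Rbf$ by asserting that $\Rbf$ has strictly proper diagonal blocks (or is strictly triangular), so that $\sum_{k\ge 0}\Rbf^{k}$ converges. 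That structural claim is false for the realizations this paper actually works with: in the state-feedback and output-feedback realizations the $(\xbf,\xbf)$ block of $I-\Rbf$ is $zI-A$, so the corresponding block of $\Rbf$ is $(1-z)I+A$, which is not even proper, and the Neumann series has no meaning there. Moreover, the paper deliberately does \emph{not} assume well-posedness --- it remarks immediately after the lemma that existence of $\Rbf$ and $\Sbf$ is not guaranteed --- so resting the proof on a well-posedness axiom narrows the statement's scope.

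The repair is cheaper than what you propose. $\Rbf$ and $\Sbf$ are square matrices with entries in the field of rational functions, and over a field a one-sided inverse of a square matrix is automatically two-sided: $(I-\Rbf)\Sbf = I$ forces $\det(I-\Rbf)\neq 0$, hence $\Sbf = (I-\Rbf)^{-1}$ and $\Sbf(I-\Rbf) = I$, with no appeal to causality or convergence. Your alternative symmetric route also needs more care than you flag, but for a different reason: substituting $\dbf = (I-\Rbf)\etabf$ into $\etabf = \Sbf\dbf$ gives $\etabf = \Sbf(I-\Rbf)\etabf$ only for $\etabf$ in the range of $\Sbf$, so it does not cancel to a matrix identity unless you separately establish that this range is the whole signal space --- which is again the uniqueness/invertibility question in disguise. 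The determinant argument sidesteps both issues.
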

We remark that \lem{R-S} does not guarantee the existence of $\Rbf$ and $\Sbf$. Instead, \lem{R-S} asserts the relationships between them when they both exist.

A useful technique introduced along with the realization-stability lemma in \cite{tseng2021realization} is the \emph{transformation} of the external disturbance. A transformation $\Tbf$ expresses the external disturbance $\dbf$ over a different basis $\wbf$:
\begin{align*}
\dbf = \Tbf \wbf.
\end{align*}
An invertible transformation not only changes the basis but also derives the equivalent system with realization $\Rbf_{eq}$ and stability $\Sbf_{eq}$ by
\begin{align*}
\Rbf_{eq} = I - \Tbf^{-1}(I-\Rbf),\quad\quad
\Sbf_{eq} = \Sbf\Tbf.
\end{align*}
\lem{R-S} implies that if two systems have the same realization matrix, they have the same stability matrix. Consequently, if we can transform a system to a certain target realization, the transformed stability matrix is the target stability matrix. This technique then enables simpler proofs of equivalence among parameterizations in \cite{tseng2021realization}.

\section{Robust Stability and Controller Synthesis}\label{sec:robust_synthesis}
In this section, we derive the condition for robust stability analysis using \lem{R-S}. The condition then allows us to formulate the general robust controller synthesis problem.

\subsection{Robust Stability and Additive Perturbations}
Consider a system perturbed according to some uncertain parameter $\Deltabf \in \Dcal$ as in \subfig{perturbed-realization}{general-perturbation}, where $\Dcal$ is the uncertainty set. Denote by $\Rtru$ its realization matrix and by $\Stru$ the corresponding stability matrix.
By \lem{R-S}, the perturbed realization and stability matrices satisfy
\begin{align*}
(I - \Rtru) \Stru = \Stru (I - \Rtru) = I.
\end{align*}
Also, the perturbed system is \emph{robustly stable} if and only if the open-loop system from the external disturbance $\dbf$ to the internal state $\etabf$ is stable under all uncertain parameter $\Deltabf \in \Dcal$. In other words, we require the stability matrix $\Stru$ to obey
\begin{align}
\Stru \in \RHinf, \quad \forall \Deltabf \in \Dcal.
\label{eqn:robust-S-general}
\end{align}

Suppose the system is subject to additive perturbation\footnote{Some papers refer the additive perturbation here as ``multiplicative fault'' \cite{niemann2002reliable} since it appears in the equations as a multiplier of a signal.} and its realization matrix $\Rtru$ can be expressed as
\begin{align}
\Rtru = \Rnm + \Deltabf
\label{eqn:perturbed-R}
\end{align}
where $\Rnm$ is the \emph{nominal realization}, as shown in \subfig{perturbed-realization}{additive-perturbation}. Define the \emph{nominal stability} $\Snm$ as the stability matrix accompanying the nominal realization $\Rnm$, we can express $\Stru$ in terms of $\Snm$ and the perturbation $\Deltabf$ as follows.

\begin{lemma}[Stability under Additive Perturbation]\label{lem:perturbed-S}
Let $\Rnm$ be the nominal realization matrix and $\Snm$ be the nominal internal stability matrix. Suppose the system realization $\Rtru$ is subject to additive perturbation \eqn{perturbed-R},
the corresponding stability $\Stru$ is given by
\begin{align*}
\Stru = \Snm (I - \Deltabf \Snm)^{-1} = (I - \Snm\Deltabf)^{-1} \Snm.
\end{align*}
\end{lemma}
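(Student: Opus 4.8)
The plan is to start from the realization-stability lemma (\lem{R-S}) applied to the perturbed system, which gives $(I - \Rtru)\Stru = I$. Substituting the additive form \eqn{perturbed-R}, namely $\Rtru = \Rnm + \Deltabf$, this becomes $(I - \Rnm - \Deltabf)\Stru = I$. The key idea is to factor out the nominal piece: since $\Snm$ is the stability matrix accompanying $\Rnm$, \lem{R-S} also gives $(I - \Rnm)\Snm = \Snm(I-\Rnm) = I$, so $(I - \Rnm)$ is invertible with inverse $\Snm$. The strategy is therefore to rewrite $(I - \Rnm - \Deltabf) = (I - \Rnm)(I - \Snm\Deltabf)$, using $(I-\Rnm)\Snm = I$, and then invert.

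Concretely, I would proceed as follows. First, from $(I - \Rnm - \Deltabf)\Stru = I$, I factor the left-hand operator as $(I-\Rnm)\big(I - \Snm\Deltabf\big)$, which is valid because $(I-\Rnm)\Snm\Deltabf = \Deltabf$. This yields $(I-\Rnm)(I - \Snm\Deltabf)\Stru = I$. Left-multiplying by $\Snm = (I-\Rnm)^{-1}$ gives $(I - \Snm\Deltabf)\Stru = \Snm$, and hence $\Stru = (I - \Snm\Deltabf)^{-1}\Snm$, which is the second of the two claimed expressions. For the first expression, I would run the symmetric argument starting from the other identity $\Stru(I - \Rtru) = I$ in \lem{R-S}: factor $(I - \Rnm - \Deltabf) = (I - \Deltabf\Snm)(I-\Rnm)$ using $\Snm(I-\Rnm)=I$, so that $\Stru(I - \Deltabf\Snm)(I-\Rnm) = I$, and right-multiplying by $\Snm$ yields $\Stru(I - \Deltabf\Snm) = \Snm$, giving $\Stru = \Snm(I - \Deltabf\Snm)^{-1}$.

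The main obstacle is not the algebra, which is routine, but the invertibility bookkeeping: the statement implicitly presumes that $\Stru$ exists (equivalently, that $(I - \Snm\Deltabf)$ and $(I - \Deltabf\Snm)$ are invertible in the relevant ring of transfer matrices), and the realization-stability lemma, as the authors emphasize, asserts relationships only when both $\Rbf$ and $\Sbf$ exist rather than guaranteeing existence. So the cleanest framing is conditional: assuming $\Stru$ exists, the two factorizations above pin it down uniquely, and the two resulting expressions must agree, which also furnishes the identity $\Snm(I - \Deltabf\Snm)^{-1} = (I - \Snm\Deltabf)^{-1}\Snm$ as a consistency check (it follows directly from $\Snm(I - \Deltabf\Snm) = (I - \Snm\Deltabf)\Snm$). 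I would state the derivation in this conditional form, mirroring the remark after \lem{R-S}, so that the result is an expression for $\Stru$ in terms of $\Snm$ and $\Deltabf$ whenever the perturbed stability matrix exists, rather than an existence claim.
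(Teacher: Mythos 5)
Your proposal is correct and follows essentially the same route as the paper: both apply \lem{R-S} to the nominal and perturbed systems and exploit the identity $(I - \Rnm - \Deltabf)\Snm = I - \Deltabf\Snm$ (equivalently, your factorization $(I-\Rnm-\Deltabf) = (I-\Rnm)(I-\Snm\Deltabf) = (I-\Deltabf\Snm)(I-\Rnm)$), the only cosmetic difference being that the paper verifies the candidate $\Snm(I-\Deltabf\Snm)^{-1}$ is an inverse of $I-\Rtru$ while you solve the defining equation for $\Stru$ directly. Your explicit remark that the result is conditional on the existence of $\Stru$ (i.e., invertibility of $I-\Snm\Deltabf$ and $I-\Deltabf\Snm$) is consistent with, and slightly more careful than, the paper's implicit treatment.
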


\begin{proof}
\lem{R-S} implies $(I - \Rnm) \Snm = I$.
Therefore,
\begin{align*}
(I - \Rtru) \Snm = (I - \Rnm - \Deltabf) \Snm = I - \Deltabf \Snm.
\end{align*}
As a result, \lem{R-S} suggests
\begin{gather*}
(I - \Rtru) \Snm (I - \Deltabf \Snm)^{-1} = I \nonumber \\
\Rightarrow\quad
\Stru = \Snm (I - \Deltabf \Snm)^{-1}.
\end{gather*}
Similarly, we can also derive $\Stru = (I - \Snm \Deltabf)^{-1} \Snm$ from $\Snm(I-\Rnm) = I$, and the lemma follows.
\end{proof}

We can interpret the resulting stability matrix $\Sbf(\Deltabf)$ in \lem{perturbed-S} as the nominal stability $\Snm$ with a feedback path $\Deltabf$ as in \fig{perturbed-stability}. From this perspective, an additive perturbation to the realization results in a feedback path in the stability.

\begin{figure}
\centering
\includegraphics{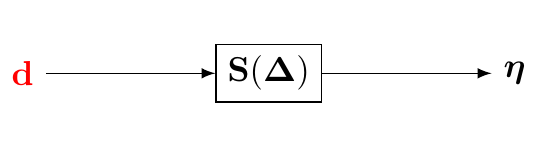}
\includegraphics{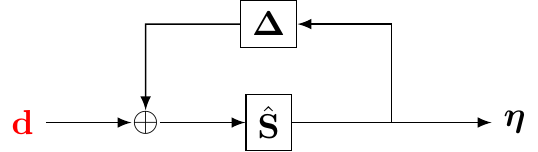}
\caption{The stability matrix $\Sbf$ maps external disturbance $\dbf$ to internal state $\etabf$. We denote by $\Stru$ the stability matrix under perturbation $\Deltabf$ and by $\Snm$ the nominal stability matrix without perturbation. When the perturbation is additive to the realization, it results in a feedback path for the stability.}
\label{fig:perturbed-stability}
\end{figure}

According to \lem{perturbed-S} and condition \eqn{robust-S-general}, to ensure robust stability of the system, we need
\begin{align}
\Snm (I - \Deltabf \Snm)^{-1} \in \RHinf, \quad \forall \Deltabf \in \Dcal.
\label{eqn:robust-S}
\end{align}

\subsection{General Formulation for Robust Controller Synthesis}
We can now generalize the general controller synthesis problem in \cite{tseng2021realization} to its robust version using condition \eqn{robust-S-general}:
\OptMinN{
g(\Rtru,\Stru,\Dcal)
}{
&\ (I-\Rtru)\Stru = \Stru&&(I-\Rtru) = I \\
\OptCons{}{\forall \Deltabf \in \Dcal}{}\\
\OptCons{\Rtru_{\abf\bbf} \in \Rp}{\forall \Deltabf \in \Dcal, \abf \neq \bbf}{}\\
\OptCons{\Stru \in \RHinf}{\forall \Deltabf \in \Dcal}{}\\
\OptCons{(\Rtru,\Stru) \in \Ccal}{\forall \Deltabf \in \Dcal}{}
}
where $\Ccal$ represents some additional constraints on the realization and stability. In particular, for a system subject to additive perturbation, the problem can be reformulated as
\OptMinN{
g(\Rnm,\Snm,\Dcal)
}{
\OptCons{(I-\Rnm)\Snm = \Snm(I-\Rnm) = I}{}{}\\
\OptCons{\Rtru_{\abf\bbf} = (\Rnm + \Deltabf)_{\abf\bbf} \in \Rp}{\forall \Deltabf \in \Dcal, \abf \neq \bbf}{}\\
\OptCons{\Stru = \Snm (I - \Deltabf \Snm)^{-1} \in \RHinf}{\forall \Deltabf \in \Dcal}{}\\
\OptCons{(\Rtru,\Stru) \in \Ccal}{\forall \Deltabf \in \Dcal}{}
}
This formulation is general as it can describe not only the robust controller synthesis problem for a given uncertainty set $\Dcal$ but also the stability margin problem like $\mu$-synthesis \cite{doyle1982analysis, doyle1985structured, zhou1998essentials}, where $\Dcal$ is itself a variable to ``maximize''.

Despite its generality, solving this general formulation is challenging in general, and the major obstacle is to ensure the perturbed stability for all $\Deltabf \in \Dcal$. Except for some computationally tractable cases as those listed in \sec{existing_results}, enforcing the robust constraints involves dealing with semi-infinite programming when the $\Dcal$ has infinite cardinality. For those cases, one may instead enforce chance constraints and adopt sampling-based techniques as in \cite{calafiore2005uncertain, erdougan2006ambiguous,tseng2016random}.

\section{Corollaries: Existing Robust Results}\label{sec:existing_results}
We unite the existing robust results under \lem{R-S} and \lem{perturbed-S}, including $\mu$-synthesis \cite{doyle1982analysis, doyle1985structured, zhou1998essentials}, primal-dual Youla parameterization \cite{niemann2002reliable}, input-output parameterization \cite{zheng2020sample}, and system level synthesis \cite{matni2017Scalable,anderson2019system,boczar2018finite}.

\begin{figure}
\centering
\includegraphics{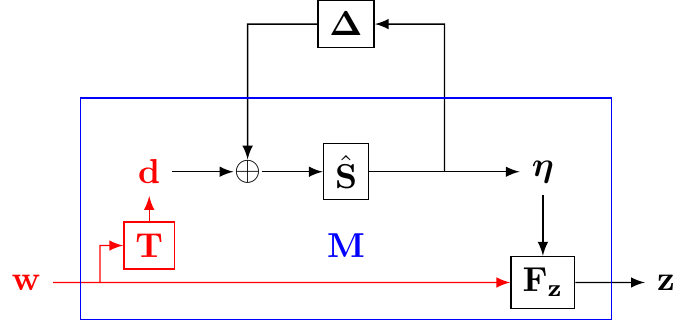}
\caption{For additive perturbation $\Deltabf$, the matrix $\Mbf$ in the $\mu$-synthesis is essentially the nominal stability matrix $\Snm$ wrapped by transformation $\Tbf$ and the output transfer matrix $\Fbf_{\zbf}$.}
\label{fig:mu-synthesis}
\end{figure}

\subsection{$\mu$-Synthesis}

$\mu$-synthesis studies the quantity ${\rm det}(I-\Mbf\Deltabf)$ \cite{doyle1982analysis, doyle1985structured, zhou1998essentials}, where the matrix $\Mbf \in \RHinf$ maps some external disturbance $\wbf$ to some output $\zbf$. The $\mu$ function, or the structured singular value, is the inverse size of the ``smallest'' structured perturbation that destabilizes $(I-\Mbf\Deltabf)^{-1}$.
Further, the original papers claim that it is always possible to choose $\Mbf$ so that $\Deltabf$ is block-diagonal. As to why ${\rm det}(I-\Mbf\Deltabf)$ is critical and why $\Deltabf$ could be formed block-diagonal, \lem{perturbed-S} provides intuitive answers.

Suppose the realization is subject to additive perturbation $\Deltabf$. We first show that $\Mbf$ is a wrapped nominal stability matrix $\Snm$.
Since $\etabf$ contains all the internal signals, the output $\zbf$ can be expressed as a linear map $\Fbf_\zbf$ of $\etabf$ and the external input $\wbf$. Meanwhile, $\wbf$ can be mapped to the external disturbance $\dbf$ through a transformation $\Tbf$. Therefore, the matrix $\Mbf$ can be expressed as in \fig{mu-synthesis}. Here the $\Stru$ is expanded by \lem{perturbed-S} as in \fig{perturbed-stability}.

Since $\Deltabf$ is an additive perturbation on the realization, we can create dummy signals to the realization so that each signal has at most one perturbed input/output. Equivalently, $\Deltabf$ is structured such that there is at most one perturbation block at each row/column. Hence, $\Deltabf$ can be permuted to take a block-diagonal form.

On the other hand, when $\wbf = \dbf$ and $\zbf = \etabf$, we have $\Mbf = \Snm \in \RHinf$. By \lem{perturbed-S}, the perturbed stability is
\begin{align*}
\Stru = \Snm(I-\Snm\Deltabf)^{-1} = \Mbf(I-\Mbf\Deltabf)^{-1},
\end{align*}
which is not stable if $I-\Mbf\Deltabf$ is not invertible, or ${\rm det}(I-\Mbf\Deltabf) = 0$.

\begin{figure}
\centering
\parbox{0.425\linewidth}{\includegraphics[scale=1]{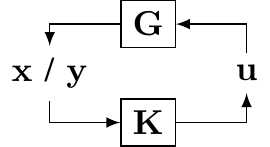}}
\parbox{0.425\linewidth}{\begin{gather*}
(I - \Rbf)\etabf = \\
\mat{I & -\Gbf\\
-\Kbf & I}
\mat{\xbf \text{ / } \ybf \\ \ubf}
\end{gather*}}
\caption{The realization with plant $\Gbf$ and controller $\Kbf$. The internal signals include state $\xbf$ (or measurement $\ybf$) and control $\ubf$.}
\label{fig:realization-G-K}
\end{figure}

\subsection{Robust Primal-Dual Youla Parameterization}
Youla parameterization is based on the doubly coprime factorization. Suppose
\begin{align}
\mat{
\Mbf_l & -\Nbf_l\\
-\Vbf_l & \Ubf_l
}\mat{
\Ubf_r & \Nbf_r\\
\Vbf_r & \Mbf_r
} = I
\label{eqn:coprime-factorization}
\end{align}
where both matrices are in $\RHinf$, $\Mbf_l$ and $\Mbf_r$ are both invertible in $\RHinf$.
\cite[Chapter 3, Theorem 4.2]{tay1998high} generalizes the primal Youla parameterization $\Qbf$ (see \cite[Theorem 5.6]{zhou1998essentials}) to also parameterize the stabilizable plants $\Gbf$ using dual Youla parameter $\Pbf$.\footnote{To avoid the confusion with the stability matrix $\Sbf$, we express the dual Youla parameter by $\Pbf$ instead.}
Here we present a modern rewrite of \cite[Chapter 3, Theorem 4.2]{tay1998high}:

\begin{corollary}\label{cor:primal-dual-Youla}
For the realization in \fig{realization-G-K}, let $\Gbf$ and $\Kbf$ be parameterized by
\begin{align*}
\Gbf =&\ (\Nbf_r - \Ubf_r \Pbf)(\Mbf_r-\Vbf_r \Pbf)^{-1}\\
\Kbf =&\ (\Vbf_r - \Mbf_r \Qbf)(\Ubf_r -\Nbf_r\Qbf)^{-1}
\end{align*}
where $\Pbf, \Qbf \in \RHinf$ are the dual and primal Youla parameters. The system is internally stable if and only if the following $(\Pbf,\Qbf)$ realization is internally stable:
\begin{center}
\includegraphics[scale=1]{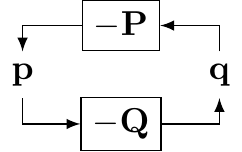}
\end{center}
\end{corollary}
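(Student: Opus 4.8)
The plan is to establish the equivalence through \lem{R-S}, by building an $\RHinf$-unit bridge between the two realization matrices out of the coprime factors. First I would read off the realization matrices from the block diagrams. For the $(\Gbf,\Kbf)$ system, \fig{realization-G-K} gives $I-\Rbf = \mat{I & -\Gbf\\ -\Kbf & I}$, and the $(\Pbf,\Qbf)$ realization encodes the feedback interconnection of the two Youla parameters, $I-\Rbf_{PQ} = \mat{I & -\Pbf\\ -\Qbf & I}$. By \lem{R-S} the accompanying stability matrices are $\Sbf = (I-\Rbf)^{-1}$ and $\Sbf_{PQ} = (I-\Rbf_{PQ})^{-1}$, so internal stability of either system is precisely membership of the corresponding inverse in $\RHinf$, and the goal reduces to showing that $\Sbf\in\RHinf$ if and only if $\Sbf_{PQ}\in\RHinf$.

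Next I would substitute the parameterizations and abbreviate the perturbed factors $\tilde{\Nbf}_r = \Nbf_r - \Ubf_r\Pbf$, $\tilde{\Mbf}_r = \Mbf_r - \Vbf_r\Pbf$, $\tilde{\Vbf}_r = \Vbf_r - \Mbf_r\Qbf$, and $\tilde{\Ubf}_r = \Ubf_r - \Nbf_r\Qbf$, so that $\Gbf = \tilde{\Nbf}_r\tilde{\Mbf}_r^{-1}$ and $\Kbf = \tilde{\Vbf}_r\tilde{\Ubf}_r^{-1}$. Writing $\Phibf_r = \mat{\Ubf_r & \Nbf_r\\ \Vbf_r & \Mbf_r}$, which is a unit in $\RHinf$ with inverse $\Phibf_l = \mat{\Mbf_l & -\Nbf_l\\ -\Vbf_l & \Ubf_l}$ by \eqn{coprime-factorization}, a direct multiplication gives $\Phibf_r(I-\Rbf_{PQ}) = \mat{\tilde{\Ubf}_r & \tilde{\Nbf}_r\\ \tilde{\Vbf}_r & \tilde{\Mbf}_r}$, while clearing the denominators of $\Gbf$ and $\Kbf$ yields $(I-\Rbf)\,\mathrm{diag}(\tilde{\Ubf}_r,\tilde{\Mbf}_r) = \mat{\tilde{\Ubf}_r & -\tilde{\Nbf}_r\\ -\tilde{\Vbf}_r & \tilde{\Mbf}_r}$. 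Since these two right-hand sides differ only by the sign pattern $\Sigma = \mathrm{diag}(I,-I)$, I obtain the bridge
\begin{align*}
I-\Rbf = \Sigma\,\Phibf_r\,(I-\Rbf_{PQ})\,\mathrm{diag}(\tilde{\Ubf}_r^{-1},\tilde{\Mbf}_r^{-1})\,\Sigma,
\end{align*}
and hence, by \lem{R-S}, $\Sbf = \Sigma\,\mathrm{diag}(\tilde{\Ubf}_r,\tilde{\Mbf}_r)\,\Sbf_{PQ}\,\Phibf_l\,\Sigma$.

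One direction is then immediate: as $\Phibf_l\in\RHinf$ and $\tilde{\Ubf}_r,\tilde{\Mbf}_r\in\RHinf$, stability of the $(\Pbf,\Qbf)$ realization forces $\Sbf\in\RHinf$. The main obstacle is the converse. Solving the bridge for $\Sbf_{PQ}$ — equivalently, repeating the computation after clearing denominators on the left through the left coprime factorizations $\Gbf = \tilde{\Mbf}_l^{-1}\tilde{\Nbf}_l$, $\Kbf = \tilde{\Ubf}_l^{-1}\tilde{\Vbf}_l$ — gives $\Sbf_{PQ} = \Phibf_l\,\Sbf\,\mathrm{diag}(\tilde{\Mbf}_l^{-1},\tilde{\Ubf}_l^{-1})$, in which the denominator inverses $\tilde{\Mbf}_l^{-1},\tilde{\Ubf}_l^{-1}$ lie in $\RHinf$ only when $\Gbf$ and $\Kbf$ are themselves stable. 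I therefore expect the crux to be proving that these denominator factors cancel, i.e., that internal stability of the $(\Gbf,\Kbf)$ loop already guarantees $\Sbf_{PQ}\in\RHinf$. This is exactly where the doubly-coprime hypothesis of \eqn{coprime-factorization} — both $\Mbf_r$ and $\Mbf_l$ invertible in $\RHinf$, together with the B\'ezout identity — is indispensable, supplying a generalized B\'ezout identity for the perturbed factors that recovers $\Sbf_{PQ}$ from $\Sbf$ through stable pre- and post-multiplication.

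Finally I would invoke \lem{R-S} once more to conclude that the $(\Gbf,\Kbf)$ realization and the $(\Pbf,\Qbf)$ realization are internally stable under identical conditions, which is the assertion of \cor{primal-dual-Youla}.
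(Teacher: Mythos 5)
Your algebra up to the ``bridge'' identity is correct: with $\tilde{\Ubf}_r = \Ubf_r - \Nbf_r\Qbf$, $\tilde{\Mbf}_r = \Mbf_r - \Vbf_r\Pbf$, etc., one indeed has $\Phibf_r(I-\Rbf_{PQ}) = \bigl[\begin{smallmatrix}\tilde{\Ubf}_r & \tilde{\Nbf}_r\\ \tilde{\Vbf}_r & \tilde{\Mbf}_r\end{smallmatrix}\bigr]$ and $(I-\Rbf)\,\mathrm{diag}(\tilde{\Ubf}_r,\tilde{\Mbf}_r) = \Sigma\,\Phibf_r(I-\Rbf_{PQ})\,\Sigma$, which yields $\Sbf = \Sigma\,\mathrm{diag}(\tilde{\Ubf}_r,\tilde{\Mbf}_r)\,\Sbf_{PQ}\,\Phibf_l\,\Sigma$ and hence a clean proof of one implication (stability of the $(\Pbf,\Qbf)$ realization implies $\Sbf\in\RHinf$). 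But the converse --- the direction that actually carries the content of the corollary --- is not proven. You correctly identify that solving the bridge for $\Sbf_{PQ}$ produces $\Sbf_{PQ} = \mathrm{diag}(\tilde{\Ubf}_r,\tilde{\Mbf}_r)^{-1}\,\Sigma\,\Sbf\,\Sigma\,\Phibf_r$, whose leading factor is not in $\RHinf$ in general, and then you state that you ``expect'' a generalized B\'ezout identity for the perturbed factors to rescue the argument. That identity (coprimeness of $\tilde{\Ubf}_r,\tilde{\Vbf}_r$ and of $\tilde{\Mbf}_r,\tilde{\Nbf}_r$, with explicit stable left inverses built from the perturbed left factors) is precisely the nontrivial step, and it is asserted rather than derived. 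As written, the proposal is a complete proof of ``if'' and a plan for ``only if.''

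It is worth noting how the paper avoids this asymmetry. Rather than clearing denominators on one side, it transforms the coprime identity \eqn{coprime-factorization} by a $\Tbf$ chosen so that $\Tbf^{-1}\Phibf_l = I-\Rbf$; \lem{R-S} then identifies the stability matrix exactly as $\Sbf = \Phibf_r\Tbf$. Because $\Phibf_r$ is a \emph{unit} in $\RHinf$ (its inverse $\Phibf_l$ is also stable), $\Sbf\in\RHinf$ holds if and only if $\Tbf\in\RHinf$, and the problem reduces in both directions at once to the stability of $\bigl[\begin{smallmatrix}I & \Pbf\\ \Qbf & I\end{smallmatrix}\bigr]^{-1}$, i.e.\ of the $(\Pbf,\Qbf)$ realization. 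The structural difference is that the paper's conjugating factors are units in $\RHinf$, whereas your $\mathrm{diag}(\tilde{\Ubf}_r,\tilde{\Mbf}_r)$ is stable but not a unit, which is exactly why your equivalence breaks in one direction. If you want to salvage your route, you must prove the generalized B\'ezout identity for the tilded factors explicitly; alternatively, adopt the transformation so that every factor multiplying $\Sbf_{PQ}$ is invertible in $\RHinf$.
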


\begin{proof}
Similar to the approach in \cite{tseng2021realization}, we transform the coprime factorization \eqn{coprime-factorization} by
\begin{align*}
\Tbf^{-1} =&
\mat{
(\Mbf_l-\Pbf\Vbf_l)^{-1} & O\\
O & (\Ubf_l -\Qbf\Nbf_l)^{-1} \\
}
\mat{
I & \Pbf\\
\Qbf & I
}\\
\Tbf =&
\mat{
I & \Pbf\\
\Qbf & I
}^{-1}
\mat{
\Mbf_l-\Pbf\Vbf_l & O\\
O & \Ubf_l -\Qbf\Nbf_l \\
}
\end{align*}
where
\begin{align*}
\mat{
I & \Pbf\\
\Qbf & I
}^{-1}
= \mat{
I & -\Pbf\\
-\Qbf & I
}
\mat{
(I - \Pbf\Qbf)^{-1} & O\\
O & (I - \Qbf\Pbf)^{-1}
}.
\end{align*}
The transformation leads to
\begin{align*}
& \Tbf^{-1}
\mat{
\Mbf_l & -\Nbf_l\\
-\Vbf_l & \Ubf_l
}
\mat{
\Ubf_r & \Nbf_r\\
\Vbf_r & \Mbf_r
}\Tbf\\
=& \mat{I & -\Gbf\\
-\Kbf & I}
\mat{
\Ubf_r & \Nbf_r\\
\Vbf_r & \Mbf_r
}\Tbf = I.
\end{align*}
By \lem{R-S}, the stability matrix of the system is
\begin{align*}
\mat{
\Ubf_r & \Nbf_r\\
\Vbf_r & \Mbf_r
}\Tbf
\end{align*}
which should be in $\RHinf$ to ensure internal stability. By assumption, the coprime factorization matrix, $\Pbf$, and $\Qbf$ are all in $\RHinf$. We then only need to ensure $\Tbf \in \RHinf$, which requires
\begin{align*}
\mat{
I & \Pbf\\
\Qbf & I
}^{-1} \in \RHinf.
\end{align*}
Treating the matrix inverse as a stability matrix, its corresponding realization is exactly the $(\Pbf,\Qbf)$ realization. Consequently, the whole system is internally stable if and only if the $(\Pbf,\Qbf)$ realization is internally stable, which concludes the proof.
\end{proof}

Leveraging the dual Youla parameterization, \cite{niemann2002reliable} proposes to embed the perturbation $\Deltabf$ on the plant $\Gbf$ into $\Pbf$, as such
\begin{align}
\Gbf(\Deltabf) =&\ (\Nbf_r - \Ubf_r \Pbf(\Deltabf))(\Mbf_r-\Vbf_r \Pbf(\Deltabf))^{-1}.
\label{eqn:Youla-plant}
\end{align}
\cite{niemann2002reliable} then suggests following result.

\begin{corollary}
Consider a perturbed Youla parameterization as in \cor{primal-dual-Youla} with the plant \eqn{Youla-plant}. The system is internally stable if
\begin{align}
(I - \Qbf\Pbf(\Deltabf))^{-1} \in \RHinf.
\end{align}
\end{corollary}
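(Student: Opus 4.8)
The plan is to read off the stability condition directly from \cor{primal-dual-Youla}, which already reduced internal stability of the $\Gbf$--$\Kbf$ system to the single requirement that the $(\Pbf,\Qbf)$ realization be stable, i.e. that $\mat{I & \Pbf\\ \Qbf & I}^{-1} \in \RHinf$. Since embedding the plant perturbation via \eqn{Youla-plant} does nothing more than replace the dual Youla parameter $\Pbf$ by $\Pbf(\Deltabf)$, the very same corollary applies verbatim to the perturbed parameterization. Hence the perturbed system is internally stable exactly when
\begin{align*}
\mat{I & \Pbf(\Deltabf)\\ \Qbf & I}^{-1} \in \RHinf,
\end{align*}
and the entire task collapses to certifying $\RHinf$-membership of this one block inverse.

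First I would substitute $\Pbf(\Deltabf)$ for $\Pbf$ in the block-inverse identity already displayed inside the proof of \cor{primal-dual-Youla}, obtaining
\begin{align*}
\mat{I & \Pbf(\Deltabf)\\ \Qbf & I}^{-1} = \mat{I & -\Pbf(\Deltabf)\\ -\Qbf & I}\mat{(I-\Pbf(\Deltabf)\Qbf)^{-1} & O\\ O & (I-\Qbf\Pbf(\Deltabf))^{-1}}.
\end{align*}
Because $\RHinf$ is closed under sums and products, it suffices that each factor lie in $\RHinf$. Under the standing Youla assumptions $\Pbf(\Deltabf),\Qbf \in \RHinf$, the left factor is automatically stable, so the problem reduces to placing both diagonal inverses $(I-\Pbf(\Deltabf)\Qbf)^{-1}$ and $(I-\Qbf\Pbf(\Deltabf))^{-1}$ in $\RHinf$.

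Here the hypothesis supplies only $(I-\Qbf\Pbf(\Deltabf))^{-1} \in \RHinf$, so the main obstacle is to recover the companion inverse $(I-\Pbf(\Deltabf)\Qbf)^{-1}$ from it. I would close this gap with the push-through identity
\begin{align*}
(I-\Pbf(\Deltabf)\Qbf)^{-1} = I + \Pbf(\Deltabf)(I-\Qbf\Pbf(\Deltabf))^{-1}\Qbf,
\end{align*}
whose right-hand side is a sum and product of $\RHinf$ matrices and is therefore itself in $\RHinf$; the identity is checked in one line by expanding $(I-\Pbf(\Deltabf)\Qbf)$ against the claimed inverse. With both diagonal blocks now stable, the full block inverse lies in $\RHinf$, and \cor{primal-dual-Youla} delivers internal stability. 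Note that this yields only the stated ``if'' direction, which is exactly what the argument needs: the push-through step turns the single hypothesis on $(I-\Qbf\Pbf(\Deltabf))^{-1}$ into sufficiency, without our having to argue the reverse implication.
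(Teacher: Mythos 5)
Your proposal is correct and follows essentially the same route as the paper's own proof: reduce to stability of the $(\Pbf(\Deltabf),\Qbf)$ block inverse via \cor{primal-dual-Youla}, then use the push-through identity $(I-\Pbf(\Deltabf)\Qbf)^{-1} = I + \Pbf(\Deltabf)(I-\Qbf\Pbf(\Deltabf))^{-1}\Qbf$ to get the companion inverse from the hypothesis. Your writeup is in fact slightly more careful than the paper's, which drops the $\Deltabf$-dependence of $\Pbf$ in that identity.
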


\begin{proof}
As shown in the proof of \cor{primal-dual-Youla}, the system is internally stable if and only if
\begin{align*}
&\mat{
I & \Pbf(\Deltabf)\\
\Qbf & I
}^{-1}\\
=& \mat{
I & -\Pbf(\Deltabf)\\
-\Qbf & I
}
\mat{
(I - \Pbf(\Deltabf)\Qbf)^{-1} & O\\
O & (I - \Qbf\Pbf(\Deltabf))^{-1}
}
\end{align*}
is in $\RHinf$. Since $\Pbf(\Deltabf), \Qbf \in \RHinf$ and
\begin{align*}
(I - \Pbf(\Deltabf)\Qbf)^{-1} = I + \Pbf (I - \Qbf\Pbf(\Deltabf))^{-1}\Qbf,
\end{align*}
we know $(I - \Qbf\Pbf(\Deltabf))^{-1} \in \RHinf$ implies $(I - \Pbf(\Deltabf)\Qbf)^{-1} \in \RHinf$, which concludes the proof.
\end{proof}

\subsection{Robust Input-Output Parameterization (IOP)}
\def\DeltaG{\Deltabf_{\Gbf}}

For a system with realization as in \fig{realization-G-K}, the internally stabilizing controllers can be parameterized by the input-output parameterization (IOP) as follows \cite{furieri2019input}:
\begin{align}
\mat{I & -\Gbf}
\mat{\Ybf & \Wbf\\ \Ubf & \Zbf}
=&
\mat{I & O},\nonumber \\
\mat{\Ybf & \Wbf\\ \Ubf & \Zbf}
\mat{-\Gbf\\ I}
=&
\mat{O\\I},\nonumber \\
\Ybf, \Ubf, \Wbf, \Zbf \in&\ \RHinf
\label{eqn:IOP-constraints}
\end{align}
\cite[Theorem C.2]{zheng2020sample} presents the following robust result of IOP.

\begin{corollary}\label{cor:robust-IOP}
For the realization in \fig{realization-G-K}, let $\Kcal_\epsilon$ be the set of robustly stabilizing controllers defined by
\begin{align*}
\Kcal_\epsilon = \left\lbrace
\Kbf : \Kbf \text{ internally stabilizes } \Gbf(\DeltaG), \forall \DeltaG \in \Dcal_\epsilon
\right\rbrace
\end{align*}
where $\Gbf(\Deltabf_{\Gbf}) = \Gbf + \DeltaG$ and $\Dcal_\epsilon = \left\lbrace \DeltaG : \norm{\DeltaG}_{\infty} < \epsilon \right\rbrace$. Then $\Kcal_\epsilon$ is parameterized by $\{\hat{\Ybf}, \hat{\Wbf}, \hat{\Ubf}, \hat{\Zbf} \}$ that satisfies \eqn{IOP-constraints} and
\begin{align*}
\norm{\hat{\Ubf}}_{\infty} \leq \epsilon^{-1},
\end{align*}
and the controller is given by $\hat{\Kbf} = \hat{\Ubf} \hat{\Ybf}^{-1}$
\end{corollary}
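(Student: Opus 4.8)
The plan is to recast the additive plant perturbation as an additive perturbation of the realization matrix and then invoke \lem{perturbed-S}. For the realization in \fig{realization-G-K}, replacing $\Gbf$ by $\Gbf(\DeltaG) = \Gbf + \DeltaG$ while holding the controller $\Kbf$ fixed perturbs the nominal realization matrix $\Rnm$ additively by
\begin{align*}
\Deltabf = \mat{O & \DeltaG \\ O & O},
\end{align*}
so \lem{perturbed-S} applies with this $\Deltabf$. The first step is to identify the nominal stability matrix: comparing the block entries of $(I-\Rnm)\Snm = \Snm(I-\Rnm) = I$ with \eqn{IOP-constraints} shows that $\Snm = \mat{\Ybf & \Wbf \\ \Ubf & \Zbf}$, and that the bottom-left block equation is exactly $\Kbf = \Ubf\Ybf^{-1}$.

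Next I would apply \lem{perturbed-S} to write $\Stru = \Snm(I - \Deltabf\Snm)^{-1}$ and exploit the structure of $\Deltabf$. Since $\Deltabf$ is nonzero only in its top-right block, a short computation gives
\begin{align*}
I - \Deltabf\Snm = \mat{I - \DeltaG\Ubf & -\DeltaG\Zbf \\ O & I},
\end{align*}
which is block upper triangular with an identity diagonal block. Its inverse---and hence $\Stru$, because $\Snm, \DeltaG \in \RHinf$---lies in $\RHinf$ if and only if $(I - \DeltaG\Ubf)^{-1} \in \RHinf$. Thus the robust-stability requirement \eqn{robust-S} collapses from a condition on the full stability matrix to one involving only the IOP block $\Ubf$: we need $(I - \DeltaG\Ubf)^{-1} \in \RHinf$ for every $\DeltaG \in \Dcal_\epsilon$.

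The final step is the robust small-gain theorem: for $\Ubf \in \RHinf$, the inverse $(I - \DeltaG\Ubf)^{-1}$ stays in $\RHinf$ for all $\DeltaG$ with $\norm{\DeltaG}_\infty < \epsilon$ precisely when $\norm{\Ubf}_\infty \leq \epsilon^{-1}$. Sufficiency is immediate, since on the open ball $\norm{\DeltaG\Ubf}_\infty \leq \norm{\DeltaG}_\infty \norm{\Ubf}_\infty < 1$, so the Neumann series for $(I - \DeltaG\Ubf)^{-1}$ converges in $\RHinf$; combined with $\Kbf = \Ubf\Ybf^{-1}$, this shows every $\{\Ybf,\Wbf,\Ubf,\Zbf\}$ meeting \eqn{IOP-constraints} and $\norm{\Ubf}_\infty \leq \epsilon^{-1}$ yields a controller in $\Kcal_\epsilon$. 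I expect the main obstacle to be the converse (tightness) direction needed for $\Kcal_\epsilon$ to be \emph{exactly} this set: showing that $\norm{\Ubf}_\infty > \epsilon^{-1}$ forces some admissible $\DeltaG \in \Dcal_\epsilon$ to make $I - \DeltaG\Ubf$ singular at a frequency, which requires the standard explicit construction of a worst-case perturbation and some care at the boundary between the open uncertainty ball and the non-strict bound $\norm{\Ubf}_\infty \leq \epsilon^{-1}$.
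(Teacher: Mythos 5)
Your proposal matches the paper's own proof essentially step for step: the same perturbation block $\Deltabf = \mat{O & \DeltaG \\ O & O}$, the same identification of $\Snm$ with the IOP quadruple, the same block-upper-triangular computation of $(I - \Deltabf\Snm)^{-1}$ reducing everything to $(I - \DeltaG\hat{\Ubf})^{-1} \in \RHinf$, and the same appeal to the small gain theorem. The converse direction you flag as a potential obstacle is exactly what the ``if and only if'' form of the robust small gain theorem supplies, and the paper likewise invokes it without further construction.
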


\begin{proof}
Let
\begin{align*}
\Snm = \mat{
\hat{\Ybf} & \hat{\Wbf}\\
\hat{\Ubf} & \hat{\Zbf}
}, \quad\quad
\Deltabf = \mat{
O & \DeltaG \\
O & O
}.
\end{align*}
\lem{perturbed-S} implies that the system is internally stable if and only if
\begin{align*}
\Stru = \Snm(I - \Deltabf \Snm)^{-1} \in \RHinf
\end{align*}
for all $\Deltabf \in \Dcal_\epsilon$. Since $O \in \Dcal_\epsilon$, we need $\Snm \in \RHinf$ (which leads to the first three conditions, see \cite{tseng2021realization}) and
\begin{align*}
&(I - \Deltabf \Snm)^{-1}
=
\mat{
I - \DeltaG \hat{\Ubf} & - \DeltaG \hat{\Zbf}\\
O & I
}^{-1}\\
=&
\mat{
(I - \DeltaG \hat{\Ubf})^{-1} & (I - \DeltaG \hat{\Ubf})^{-1} \DeltaG \hat{\Zbf} \\
O & I
} \in \RHinf.
\end{align*}
Since $\DeltaG$ has a bounded $H_{\infty}$ norm, we know $\DeltaG \in \RHinf$. By the small gain theorem, $(I - \DeltaG \hat{\Ubf})^{-1} \in \RHinf$ for all $\DeltaG \in \Dcal_\epsilon$ if and only if $\norm{\hat{\Ubf}}_{\infty} \leq \epsilon^{-1}$, which concludes the proof.
\end{proof}

\subsection{Robust System Level Synthesis (SLS)}

\renewcommand{\paragraph}[1]{\vspace*{0.5\baselineskip}\noindent\textbf{#1}}
\def\DeltaSLS{\Deltabf^{\rm SLS}}

System level synthesis (SLS) synthesizes internally stabilizing controllers using system level parameterization (SLP). Recent studies investigate robust controller synthesis problem with perturbed SLP under both state-feedback and output-feedback settings. We show below how to derive those results using \lem{perturbed-S}.

\begin{figure}
\centering
\parbox{0.425\linewidth}{\includegraphics[scale=1]{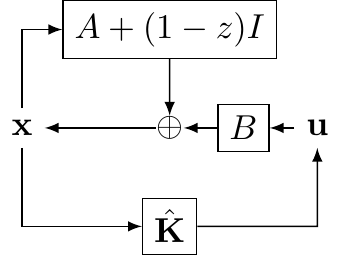}}
\parbox{0.425\linewidth}{\begin{gather*}
(I - \Rbf)\etabf = \\
\mat{zI-A & -B\\
-\hat{\Kbf} & I}
\mat{\xbf \\ \ubf}
\end{gather*}}
\caption{The realization of a state-feedback system with nominal controller $\hat{\Kbf}$. The internal signals are state $\xbf$ and control $\ubf$.}
\label{fig:realization-state-feedback}
\end{figure}

\begin{figure}
\centering
\parbox{0.475\linewidth}{\includegraphics[scale=1]{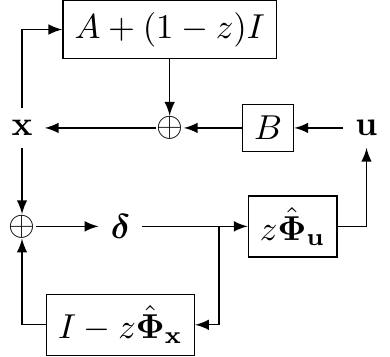}}
\parbox{0.5\linewidth}{\begin{gather*}
(I - \Rbf)\etabf = \\
\mat{zI-A & -B & O\\
O & I & -z\hat{\Phibf}_\ubf\\
-I & O & z\hat{\Phibf}_\xbf}
\mat{\xbf \\ \ubf \\ \deltabf}
\end{gather*}}
\caption{The robust state-feedback SLS realization using the SLP $\hatSFpair$. The internal signals are state $\xbf$, control $\ubf$, and estimated disturbance $\delta$.}
\label{fig:realization-sf-SLS}
\end{figure}

\paragraph{State-Feedback:} The following robust result from \cite[Theorem 2]{matni2017Scalable} and \cite[Theorem 4.3]{anderson2019system} examines the perturbed state-feedback SLP.

\begin{corollary}\label{cor:robust-sf-SLS}
For the system with realization as in \fig{realization-state-feedback}, let $(\hat{\Phibf}_\xbf, \hat{\Phibf}_\ubf, \DeltaSLS)$ be a solution to
\begin{align*}
\mat{zI - A & -B}
\mat{\hat{\Phibf}_\xbf\\ \hat{\Phibf}_\ubf} = I + \DeltaSLS,\quad  \hat{\Phibf}_\xbf, \hat{\Phibf}_\ubf \in z^{-1}\RHinf.
\end{align*}
Then, the controller realization
\begin{align*}
\hat{\deltabf}_\xbf = \xbf - \hat{\xbf}, \quad
\ubf = z \hat{\Phibf}_\xbf\hat{\deltabf}_\xbf, \quad
\hat{\xbf} = (z \hat{\Phibf}_\xbf - I)\hat{\deltabf}_\xbf
\end{align*}
internally stabilizes the system if and only if $(I + \DeltaSLS)^{-1}$ is stable. Furthermore, the actual system responses achieved are given by
\begin{align*}
\mat{\xbf \\ \ubf} =
\mat{\hat{\Phibf}_\xbf\\ \hat{\Phibf}_\ubf} (I + \DeltaSLS)^{-1} \dbf_\xbf.
\end{align*}
\end{corollary}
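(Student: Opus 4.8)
The plan is to apply \lem{R-S} directly to the closed-loop realization drawn in \fig{realization-sf-SLS}, which already encodes the given controller realization together with the plant $\mat{zI-A & -B}$. Reading that figure off, the system satisfies $(I-\Rbf)\etabf = \dbf$ with internal state $\etabf = \mat{\xbf & \ubf & \deltabf}^{\top}$,
\begin{align*}
(I-\Rbf) = \mat{zI-A & -B & O\\ O & I & -z\hat{\Phibf}_\ubf\\ -I & O & z\hat{\Phibf}_\xbf},
\end{align*}
and external disturbance $\dbf = \mat{\dbf_\xbf & O & O}^{\top}$ entering only the state equation. Since $\Stru = (I-\Rbf)^{-1}$ by \lem{R-S}, internal stability is precisely the requirement $\Stru \in \RHinf$, so the whole argument reduces to inverting this $3\times 3$ block matrix and tracking which factors can leave $\RHinf$.

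First I would solve the block system for a generic disturbance $\mat{d_1 & d_2 & d_3}^{\top}$. The last two rows give $\xbf = z\hat{\Phibf}_\xbf\deltabf - d_3$ and $\ubf = z\hat{\Phibf}_\ubf\deltabf + d_2$; substituting these into the state row and collecting terms yields $\mat{zI-A & -B}\mat{\hat{\Phibf}_\xbf \\ \hat{\Phibf}_\ubf}(z\deltabf) = d_1 + Bd_2 + (zI-A)d_3$. The defining relation $\mat{zI-A & -B}\mat{\hat{\Phibf}_\xbf \\ \hat{\Phibf}_\ubf} = I + \DeltaSLS$ then collapses the bracket, so $z\deltabf = (I+\DeltaSLS)^{-1}\!\left(d_1 + Bd_2 + (zI-A)d_3\right)$. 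This is the concrete place where $(I+\DeltaSLS)^{-1}$ appears; conceptually it is exactly the feedback path that \lem{perturbed-S} and \fig{perturbed-stability} predict, with $\DeltaSLS$ closing a loop around the estimated-disturbance channel $z\deltabf$.

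Back-substituting then expresses every block of $\Stru$ as a product of $\{\hat{\Phibf}_\xbf,\hat{\Phibf}_\ubf,z^{-1}I\}$, the single factor $(I+\DeltaSLS)^{-1}$, and the constants $\{I,B,zI-A\}$, plus a constant feedthrough part. The heart of the proof is the equivalence $\Stru \in \RHinf \Leftrightarrow (I+\DeltaSLS)^{-1}\in\RHinf$. The direction $\Stru\in\RHinf \Rightarrow (I+\DeltaSLS)^{-1}\in\RHinf$ is immediate once I isolate the $\deltabf \leftarrow d_1$ block, which equals $z^{-1}(I+\DeltaSLS)^{-1}$; being strictly proper, its membership in $\RHinf$ forces $(I+\DeltaSLS)^{-1}\in\RHinf$. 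The converse $(I+\DeltaSLS)^{-1}\in\RHinf \Rightarrow \Stru\in\RHinf$ is where the only genuine bookkeeping lives: the factor $zI-A$ is improper, so I must invoke $\hat{\Phibf}_\xbf,\hat{\Phibf}_\ubf \in z^{-1}\RHinf$ to verify that each product such as $\hat{\Phibf}_\xbf(I+\DeltaSLS)^{-1}(zI-A)$ stays proper and stable. I expect this check — confirming that the strict properness of the SLP absorbs the degree-one factor $zI-A$ — to be the main obstacle, though it is routine rather than deep.

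Finally, to recover the claimed closed-loop responses I would specialize to disturbance on the state only, setting $d_2 = d_3 = O$ and $d_1 = \dbf_\xbf$. This leaves $z\deltabf = (I+\DeltaSLS)^{-1}\dbf_\xbf$, and hence $\xbf = z\hat{\Phibf}_\xbf\deltabf = \hat{\Phibf}_\xbf(I+\DeltaSLS)^{-1}\dbf_\xbf$ and $\ubf = \hat{\Phibf}_\ubf(I+\DeltaSLS)^{-1}\dbf_\xbf$, i.e.\ $\mat{\xbf \\ \ubf} = \mat{\hat{\Phibf}_\xbf \\ \hat{\Phibf}_\ubf}(I+\DeltaSLS)^{-1}\dbf_\xbf$, which matches the statement and closes the proof.
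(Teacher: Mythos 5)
Your proof is correct, and it takes a genuinely different, more self-contained route than the paper's. The paper stays with the two-signal nominal realization of \fig{realization-state-feedback}: it completes $\{\hat{\Phibf}_\xbf,\hat{\Phibf}_\ubf\}$ with columns $\Sbf_{\xu},\Sbf_{\uu}$ so that $(I-\Rbf)$ times that matrix equals $\mathrm{diag}(I+\DeltaSLS,\,I)$, reads off the stability matrix by right-multiplying by the inverse of this block-diagonal factor (which immediately yields the response $\hat{\Phibf}_\xbf(I+\DeltaSLS)^{-1}$, $\hat{\Phibf}_\ubf(I+\DeltaSLS)^{-1}$ and the equivalence with $(I+\DeltaSLS)^{-1}$ being stable), and then \emph{cites} the transformation argument of \cite{tseng2021realization} to transfer internal stability from the nominal controller $\hat{\Kbf}$ to the three-signal controller realization of \fig{realization-sf-SLS}. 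You instead invert the three-signal realization directly by block elimination; this makes the appearance of $(I+\DeltaSLS)^{-1}$ on the $z\deltabf$ channel explicit and establishes the claim for the stated controller realization without appealing to the prior equivalence result, at the cost of the properness bookkeeping you correctly flag (each improper factor $zI-A$ is absorbed because $z\hat{\Phibf}_\xbf, z\hat{\Phibf}_\ubf \in \RHinf$, e.g.\ $\hat{\Phibf}_\xbf(I+\DeltaSLS)^{-1}(zI-A) = (z\hat{\Phibf}_\xbf)(I+\DeltaSLS)^{-1} - \hat{\Phibf}_\xbf(I+\DeltaSLS)^{-1}A$), and that check does go through. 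The one step to phrase more carefully is the forward direction: $z^{-1}(I+\DeltaSLS)^{-1}\in\RHinf$ by itself gives stability of $(I+\DeltaSLS)^{-1}$ --- which is exactly what the corollary asserts --- but not properness, so state that conclusion as ``stable'' rather than as membership in $\RHinf$.
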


\begin{proof}
By definition, $(\hat{\Phibf}_\xbf, \hat{\Phibf}_\ubf, \DeltaSLS)$ is also a solution to
\begin{align*}
\mat{zI-A & -B\\
-\hat{\Kbf} & I}
\mat{\hat{\Phibf}_\xbf & \Sbf_{\xu} \\
\hat{\Phibf}_\ubf & \Sbf_{\uu}
} =
\mat{I + \DeltaSLS & O \\
O & I
}.
\end{align*}
Therefore,
by \lem{R-S}, the stability matrix for the realization in \fig{realization-state-feedback} is
\begin{align*}
&\mat{\hat{\Phibf}_\xbf & \Sbf_{\xu} \\
\hat{\Phibf}_\ubf & \Sbf_{\uu}
}
\mat{I + \DeltaSLS & O \\
O & I
}^{-1}\\
=& \mat{\hat{\Phibf}_\xbf(I + \DeltaSLS)^{-1} & \Sbf_{\xu} \\
\hat{\Phibf}_\ubf(I + \DeltaSLS)^{-1} & \Sbf_{\uu}
},
\end{align*}
which derives the system response.
As shown in \cite{tseng2021realization}, the nominal controller $\hat{\Kbf}$ is internally stabilizing if and only if
\begin{align*}
\hat{\Phibf}_\xbf(I + \DeltaSLS)^{-1}, \hat{\Phibf}_\ubf(I + \DeltaSLS)^{-1} \in z^{-1}\RHinf.
\end{align*}
In other words, since $\hat{\Phibf}_\xbf, \hat{\Phibf}_\ubf \in z^{-1}\RHinf$, $\Kbf$ is internally stabilizing if and only if $(I + \DeltaSLS)^{-1} \in \RHinf$. The internal stability of the proposed controller can then be showed by transforming the realization in \fig{realization-state-feedback} to the one in \fig{realization-sf-SLS} as done in \cite{tseng2021realization}.
\end{proof}

\begin{figure}
\centering
\parbox{0.45\linewidth}{\includegraphics[scale=1]{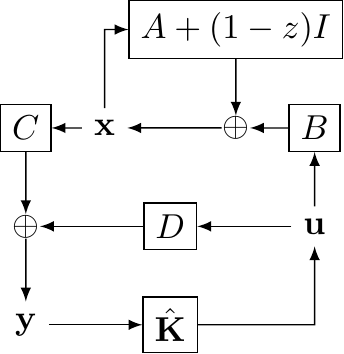}}
\parbox{0.45\linewidth}{\begin{gather*}
(I - \Rbf)\etabf = \\
\mat{zI-A & -B & O\\
O & I & -\hat{\Kbf} \\
-C & -D & I}
\mat{\xbf \\ \ubf \\ \ybf}
\end{gather*}}
\caption{The realization of an output-feedback system with nominal controller $\hat{\Kbf}$. The internal state $\etabf$ consists of state $\xbf$, control $\ubf$, and measurement $\ybf$ signals.}
\label{fig:realization-output-feedback}
\end{figure}

\paragraph{Output-Feedback:} For output-feedback systems as in \fig{realization-output-feedback}, the internally stabilizing controllers can be parameterized by the output-feedback SLP as follows \cite{wang2019system,anderson2019system}.
\begin{subequations}\label{eqn:of-SLS-constraints}
\begin{align}
\mat{
zI - A & -B
}
\mat{
\Phibf_{\xx} & \Phibf_{\xy}\\
\Phibf_{\ux} & \Phibf_{\uy}
}
=&
\mat{I & O},
\label{eqn:of-SLS-constraints-1}
\\
\mat{
\Phibf_{\xx} & \Phibf_{\xy}\\
\Phibf_{\ux} & \Phibf_{\uy}
}
\mat{
zI - A \\ -C
}
=&
\mat{I \\ O}, \\
\Phibf_{\xx}, \Phibf_{\xy}, \Phibf_{\ux} \in z^{-1}\RHinf,&\ \quad \Phibf_{\uy} \in \RHinf,
\label{eqn:of-SLS-constraints-3}
\end{align}
\end{subequations}

The two corollaries below are from \cite[Lemma 4.3, Corollary 4.4]{boczar2018finite}, which concern the controller resulting from perturbed \eqn{of-SLS-constraints}.

\begin{corollary}\label{cor:robust-of-SLS-condition}
For the system $\Gbf$ with realization as in \fig{realization-output-feedback} with $D=O$, let $\hatOFqple$ satisfy \eqn{of-SLS-constraints-1}, \eqn{of-SLS-constraints-3}, and
\begin{align}
\mat{
\hat{\Phibf}_{\xx} & \hat{\Phibf}_{\xy}\\
\hat{\Phibf}_{\ux} & \hat{\Phibf}_{\uy}
}
\mat{
zI - A \\ -C
}
=&
\mat{I + \DeltaSLS_1 \\ \DeltaSLS_2}
\label{eqn:robust-of-SLS-constraint}
\end{align}
and let the system response be given by
\begin{align*}
&\mat{
\Phibf_{\xx}(\DeltaSLS) & \Phibf_{\xy}(\DeltaSLS)\\
\Phibf_{\ux}(\DeltaSLS) & \Phibf_{\uy}(\DeltaSLS)
}\\
=&
\mat{
(I + \DeltaSLS_1)^{-1} & O\\
-\DeltaSLS_2 (I + \DeltaSLS_1)^{-1} & I
}
\mat{
\hat{\Phibf}_{\xx} & \hat{\Phibf}_{\xy} \\
\hat{\Phibf}_{\ux} & \hat{\Phibf}_{\uy}
}
\end{align*}
where by assumption $(I + \DeltaSLS_1)^{-1}$ exists and is in $\RHinf$. Then $\{\Phibf_{\xx}(\DeltaSLS)$, $\Phibf_{\ux}(\DeltaSLS)$, $\Phibf_{\xy}(\DeltaSLS)$, $\Phibf_{\uy}(\DeltaSLS)\}$ satisfies \eqn{of-SLS-constraints} for $\Gbf$. Furthermore, suppose $C$ is subject to an additive disturbance $\Deltabf_C$, i.e.,
\begin{align*}
C(\Deltabf_C) = C + \Deltabf_C,
\end{align*}
and
\begin{align*}
\DeltaSLS_1 = -\hat{\Phibf}_{\xy}\Deltabf_C,\quad\quad
\DeltaSLS_2 = -\hat{\Phibf}_{\xy}\Deltabf_C.
\end{align*}
We then have $\hatOFqple$ also satisfies \eqn{of-SLS-constraints} for the perturbed system.
\end{corollary}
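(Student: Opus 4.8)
The plan is to read the construction as one left multiplication, $\Phibf(\DeltaSLS) = T\hat{\Phibf}$ with $\hat{\Phibf} = \mat{\hat{\Phibf}_{\xx} & \hat{\Phibf}_{\xy}\\ \hat{\Phibf}_{\ux} & \hat{\Phibf}_{\uy}}$ and $T = \mat{(I+\DeltaSLS_1)^{-1} & O\\ -\DeltaSLS_2(I+\DeltaSLS_1)^{-1} & I}$, and then to check the three ingredients of \eqn{of-SLS-constraints} for $\Gbf$ separately: the right affine identity, the left affine identity \eqn{of-SLS-constraints-1}, and the memberships \eqn{of-SLS-constraints-3}. The right identity is immediate. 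Since $\hat{\Phibf}$ obeys \eqn{robust-of-SLS-constraint}, I would write $\Phibf(\DeltaSLS)\mat{zI-A\\-C} = T\mat{I+\DeltaSLS_1\\\DeltaSLS_2}$, and the lower-triangular form of $T$ telescopes the right-hand side to $\mat{I\\O}$.

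The left identity is the delicate step, as left multiplication by $T$ does not visibly preserve $\mat{zI-A & -B}\hat{\Phibf}$. I would first extract a compatibility identity by evaluating $\mat{zI-A & -B}\hat{\Phibf}\mat{zI-A\\-C}$ two ways: grouping to the left and using \eqn{of-SLS-constraints-1} gives $zI-A$, whereas grouping to the right and using \eqn{robust-of-SLS-constraint} gives $(zI-A)(I+\DeltaSLS_1) - B\DeltaSLS_2$. Equating the two forces $(zI-A)\DeltaSLS_1 = B\DeltaSLS_2$. A direct block product then shows $\mat{zI-A & -B}T = \mat{zI-A & -B}$, because its $(1,1)$ block $[(zI-A)+B\DeltaSLS_2](I+\DeltaSLS_1)^{-1}$ collapses to $zI-A$ exactly under this identity. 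Hence $\mat{zI-A & -B}\Phibf(\DeltaSLS) = \mat{zI-A & -B}\hat{\Phibf} = \mat{I & O}$ by \eqn{of-SLS-constraints-1}.

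For the memberships I would expand $T\hat{\Phibf}$ blockwise: the top row $(I+\DeltaSLS_1)^{-1}\hat{\Phibf}_{\xx}$, $(I+\DeltaSLS_1)^{-1}\hat{\Phibf}_{\xy}$ stays in $z^{-1}\RHinf$ because $(I+\DeltaSLS_1)^{-1}\in\RHinf$ by assumption and $\hat{\Phibf}_{\xx},\hat{\Phibf}_{\xy}\in z^{-1}\RHinf$; the bottom row adds $-\DeltaSLS_2(I+\DeltaSLS_1)^{-1}$ times the top hat blocks to $\hat{\Phibf}_{\ux},\hat{\Phibf}_{\uy}$ and so remains in $z^{-1}\RHinf$ and $\RHinf$, provided $\DeltaSLS_2\in\RHinf$. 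For the second part I would substitute $C(\Deltabf_C)=C+\Deltabf_C$ into $\hat{\Phibf}\mat{zI-A\\-C(\Deltabf_C)}$ and expand using the nominal right constraint $\hat{\Phibf}\mat{zI-A\\-C}=\mat{I\\O}$; the residual is $\mat{-\hat{\Phibf}_{\xy}\Deltabf_C\\-\hat{\Phibf}_{\uy}\Deltabf_C}$, which reads off $\DeltaSLS_1$ and $\DeltaSLS_2$ and makes $\DeltaSLS_2\in\RHinf$ automatic. Because \eqn{of-SLS-constraints-1} and \eqn{of-SLS-constraints-3} are free of $C$, they are untouched by $\Deltabf_C$, so $\hat{\Phibf}$ satisfies the hypotheses of the first part for the perturbed plant.

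The main obstacle I expect is precisely the left identity: it looks as though $T$ should spoil $\mat{zI-A & -B}\hat{\Phibf}$, and the construction survives only because the two affine constraints are compatible through $(zI-A)\DeltaSLS_1 = B\DeltaSLS_2$. Surfacing and applying that identity is the crux; the remaining checks are routine block algebra together with closure of $\RHinf$ under products.
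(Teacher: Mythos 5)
Your argument is correct and, at its crux, identical to the paper's: the paper also extracts the compatibility identity $(zI-A)\DeltaSLS_1 = B\DeltaSLS_2$ by evaluating $\mat{zI-A & -B}\hat{\Phibf}\mat{zI-A\\-C}$ in two ways, and uses it in exactly the same manner to show that left-multiplication by your $T$ is invisible to $\mat{zI-A & -B}$. The packaging differs: the paper embeds everything in the full $3\times 3$ realization--stability framework of \lem{R-S}, obtaining $T$ as the inverse of $\mat{I+\DeltaSLS_1 & O\\ \DeltaSLS_2 & I}$ acting on the nominal stability matrix, and it handles the $\Deltabf_C$ part by writing the structured perturbation of the realization and invoking \lem{perturbed-S}; your direct block-algebra verification of the two affine constraints, and your direct substitution of $C+\Deltabf_C$ into the nominal right constraint, reach the same conclusions with less machinery. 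Two remarks. First, you leave $\DeltaSLS_2\in\RHinf$ as a proviso in the general (first) part, but it is needed there for the membership \eqn{of-SLS-constraints-3} of the bottom row of $T\hat{\Phibf}$, and it does follow from the hypotheses: the second block row of \eqn{robust-of-SLS-constraint} reads $\DeltaSLS_2 = \hat{\Phibf}_{\ux}(zI-A) - \hat{\Phibf}_{\uy}C$, and $z\hat{\Phibf}_{\ux},\hat{\Phibf}_{\uy}\in\RHinf$ give $\DeltaSLS_2\in\RHinf$; this is exactly how the paper closes that loop, so you should state it rather than assume it. Second, your residual computation yields $\DeltaSLS_2 = -\hat{\Phibf}_{\uy}\Deltabf_C$, whereas the corollary as printed has $-\hat{\Phibf}_{\xy}\Deltabf_C$; yours is the dimensionally consistent expression, so you have in effect corrected a typo in the statement rather than introduced an error.
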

\begin{corollary}\label{cor:robust-of-SLS-controller-condition}
Suppose $\hatOFqple$ satisfies the conditions in \cor{robust-of-SLS-condition}. Then, the controller
\begin{align*}
\hat{\Kbf} = \hat{\Phibf}_{\uy} - \hat{\Phibf}_{\ux} \hat{\Phibf}_{\xx}^{-1} \hat{\Phibf}_{\xy}
\end{align*}
stabilizes the system and achieves the closed-loop system response if and only if $(I + \DeltaSLS_1)^{-1} \in \RHinf$.
\end{corollary}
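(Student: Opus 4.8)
The plan is to reduce \cor{robust-of-SLS-controller-condition} to the nominal output-feedback SLS synthesis result by exploiting the response correspondence already established in \cor{robust-of-SLS-condition}. That corollary shows that, whenever $(I + \DeltaSLS_1)^{-1} \in \RHinf$, the weighted quadruple $\{\Phibf_{\xx}(\DeltaSLS), \Phibf_{\ux}(\DeltaSLS), \Phibf_{\xy}(\DeltaSLS), \Phibf_{\uy}(\DeltaSLS)\}$ satisfies the unperturbed constraints \eqn{of-SLS-constraints} for $\Gbf$. Since the nominal theory (the realization-stability argument of \cite{tseng2021realization} applied to \fig{realization-output-feedback}) guarantees that any quadruple obeying \eqn{of-SLS-constraints} is internally stabilizing and is realized exactly by the controller $\Phibf_{\uy}(\DeltaSLS) - \Phibf_{\ux}(\DeltaSLS)\Phibf_{\xx}(\DeltaSLS)^{-1}\Phibf_{\xy}(\DeltaSLS)$, it suffices to establish two facts: (i) this reconstructed controller coincides with $\hat{\Kbf}$, and (ii) the weighted quadruple lies in the stable spaces of \eqn{of-SLS-constraints-3} if and only if $(I + \DeltaSLS_1)^{-1} \in \RHinf$.

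For fact (i) I would substitute the weighting identity
\begin{align*}
\mat{\Phibf_{\xx}(\DeltaSLS) & \Phibf_{\xy}(\DeltaSLS) \\ \Phibf_{\ux}(\DeltaSLS) & \Phibf_{\uy}(\DeltaSLS)} = \mat{(I+\DeltaSLS_1)^{-1} & O \\ -\DeltaSLS_2(I+\DeltaSLS_1)^{-1} & I} \mat{\hat{\Phibf}_{\xx} & \hat{\Phibf}_{\xy} \\ \hat{\Phibf}_{\ux} & \hat{\Phibf}_{\uy}}
\end{align*}
of \cor{robust-of-SLS-condition} directly into the reconstruction formula. Using $\Phibf_{\xx}(\DeltaSLS)^{-1} = \hat{\Phibf}_{\xx}^{-1}(I+\DeltaSLS_1)$, the product $\Phibf_{\ux}(\DeltaSLS)\Phibf_{\xx}(\DeltaSLS)^{-1}$ collapses to $\hat{\Phibf}_{\ux}\hat{\Phibf}_{\xx}^{-1}(I+\DeltaSLS_1) - \DeltaSLS_2$, so the two $\DeltaSLS_2$-dependent terms---one from $\Phibf_{\uy}(\DeltaSLS)$ and one from $\Phibf_{\ux}(\DeltaSLS)\Phibf_{\xx}(\DeltaSLS)^{-1}\Phibf_{\xy}(\DeltaSLS)$---cancel, leaving exactly $\hat{\Phibf}_{\uy} - \hat{\Phibf}_{\ux}\hat{\Phibf}_{\xx}^{-1}\hat{\Phibf}_{\xy} = \hat{\Kbf}$. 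This cancellation is the concrete heart of the argument: although $\hat{\Kbf}$ is assembled from the perturbed quadruple, it is precisely the controller that realizes the $\DeltaSLS$-weighted responses $\Phibf(\DeltaSLS)$, so $\hat{\Kbf}$ achieves the stated closed-loop response.

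For fact (ii), the ``if'' direction is immediate: when $(I+\DeltaSLS_1)^{-1} \in \RHinf$, each block of $\Phibf(\DeltaSLS)$ is a product of $(I+\DeltaSLS_1)^{-1}$---or of the stable $\DeltaSLS_2(I+\DeltaSLS_1)^{-1}$, recalling that $\DeltaSLS_2 \in \RHinf$ by \eqn{robust-of-SLS-constraint} together with \eqn{of-SLS-constraints-3}---with an already-stable block of $\hat{\Phibf}$, hence lands in the required $z^{-1}\RHinf$ or $\RHinf$ space. I expect the ``only if'' direction to be the main obstacle. Rather than trying to invert $\Phibf_{\xx}(\DeltaSLS) = (I+\DeltaSLS_1)^{-1}\hat{\Phibf}_{\xx}$ by hand---which stumbles because $\hat{\Phibf}_{\xx}^{-1}$ need not be stable---I would transform the closed-loop realization of \fig{realization-output-feedback} under $\hat{\Kbf}$ into the output-feedback SLS realization, exactly as is done for the state-feedback case in the proof of \cor{robust-sf-SLS} and in \cite{tseng2021realization}. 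In the transformed realization the perturbation enters as the block $I + \DeltaSLS_1$ along a dedicated channel, so that \lem{R-S} exhibits a component of the stability matrix equal to $(I+\DeltaSLS_1)^{-1}$ itself; internal stability then forces $(I+\DeltaSLS_1)^{-1} \in \RHinf$, closing the equivalence. Verifying that this transformation isolates $(I+\DeltaSLS_1)^{-1}$ as a standalone stability channel---and that $\DeltaSLS_2$ contributes only through stable factors---is the step that will demand the most care.
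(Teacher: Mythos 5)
Your proposal is correct and rests on the same foundations as the paper's proof (\lem{R-S} plus the transformation technique), but it decomposes the work differently. The paper proves \cor{robust-of-SLS-condition} and \cor{robust-of-SLS-controller-condition} jointly in one computation: it writes the closed-loop realization of \fig{realization-output-feedback} under $\hat{\Kbf}$ as satisfying a block identity whose right-hand side is the block-lower-triangular matrix $\Lambda$ built from $I+\DeltaSLS_1$ and $\DeltaSLS_2$, extracts $\hat{\Kbf}$ from the transformed block equation involving $\hat{\Kbf}^{-1}$, and then reads both the achieved response and the stability criterion directly off $\Stru=\Lambda^{-1}\Snm$, using $\DeltaSLS_2\in\RHinf$ to reduce everything to $(I+\DeltaSLS_1)^{-1}$. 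You instead (i) verify by direct cancellation that the perturbed quadruple reconstructs exactly $\hat{\Kbf}$ --- your algebra checks out: $\Phibf_{\ux}(\DeltaSLS)\Phibf_{\xx}(\DeltaSLS)^{-1} = \hat{\Phibf}_{\ux}\hat{\Phibf}_{\xx}^{-1}(I+\DeltaSLS_1) - \DeltaSLS_2$ and the two $\DeltaSLS_2$-terms cancel --- and then (ii) delegate sufficiency to the nominal parameterization theorem applied to $\Phibf(\DeltaSLS)$. This buys a cleaner separation between ``which controller realizes the perturbed responses'' and ``when those responses are admissible,'' and your step (i) makes explicit what the paper leaves implicit in its $\hat{\Kbf}^{-1}$ block equation. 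Where you are no better off than the paper is the ``only if'': the paper simply asserts that $\Stru\in\RHinf$ forces $(I+\DeltaSLS_1)^{-1}\in\RHinf$, and your proposed fix --- transforming to the output-feedback SLS realization so that $(I+\DeltaSLS_1)^{-1}$ appears as a standalone channel of the stability matrix, mirroring \cor{robust-sf-SLS} --- is the standard route. Be aware, though, that you cannot shortcut this by extracting $(I+\DeltaSLS_1)^{-1}$ from the responses $\Phibf(\DeltaSLS)$ via the constraint identities (those collapse to $I$, which is precisely the content of \cor{robust-of-SLS-condition}), so the transformation argument, together with the check that the transformation and its inverse do not themselves introduce or cancel unstable poles, really is the step that carries the converse; you correctly flag it as the part demanding the most care.
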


We now prove these two corollaries below by investigating $\Stru$.

\begin{proof}
By definition, $\hatOFqple$ is also a solution to
\begin{align*}
&\mat{
\hat{\Phibf}_{\xx} & \Sbf_{\xu} & \hat{\Phibf}_{\xy}\\
\hat{\Phibf}_{\ux} & \Sbf_{\uu} & \hat{\Phibf}_{\uy}\\
\Sbf_{\yx} & \Sbf_{\yu} & \Sbf_{\yy}
}
\mat{zI-A & -B & O\\
O & I & -\hat{\Kbf} \\
-C & -D & I}\\
=&
\mat{
I + \DeltaSLS_1 & O & O \\
\DeltaSLS_2 & I & O\\
O & O & I
}
\end{align*}
where $D = O$. Therefore, using the same transformation technique in \cite{tseng2021realization}, we can derive the nominal controller $\hat{\Kbf}$ in~\cor{robust-of-SLS-controller-condition} from
\begin{align*}
\mat{
\hat{\Phibf}_{\xx} & \hat{\Phibf}_{\xy}\\
\hat{\Phibf}_{\ux} & \hat{\Phibf}_{\uy}
}
\mat{zI-A & -B \\
-C & \hat{\Kbf}^{-1}}
=
\mat{
I + \DeltaSLS_1 & O \\
\DeltaSLS_2 & I
}.
\end{align*}
Also, by \lem{R-S}, the stability matrix for the realization in \fig{realization-output-feedback} is
\begin{align*}
&\mat{
\Phibf_{\xx}(\DeltaSLS) & \Sbf_{\xu}(\DeltaSLS) & \Phibf_{\xy}(\DeltaSLS)\\
\Phibf_{\ux}(\DeltaSLS) & \Sbf_{\uu}(\DeltaSLS) & \Phibf_{\uy}(\DeltaSLS)\\
\Sbf_{\yx}(\DeltaSLS) & \Sbf_{\yu}(\DeltaSLS) & \Sbf_{\yy}(\DeltaSLS)
}\\
=&
\mat{
I + \DeltaSLS_1 & O & O \\
\DeltaSLS_2 & I & O\\
O & O & I
}^{-1}
\mat{
\hat{\Phibf}_{\xx} & \Sbf_{\xu} & \hat{\Phibf}_{\xy}\\
\hat{\Phibf}_{\ux} & \Sbf_{\uu} & \hat{\Phibf}_{\uy}\\
\Sbf_{\yx} & \Sbf_{\yu} & \Sbf_{\yy}
}\\
=&
\mat{
(I + \DeltaSLS_1)^{-1} & O & O \\
-\DeltaSLS_2(I + \DeltaSLS_1)^{-1} & I & O\\
O & O & I
}
\mat{
\hat{\Phibf}_{\xx} & \Sbf_{\xu} & \hat{\Phibf}_{\xy}\\
\hat{\Phibf}_{\ux} & \Sbf_{\uu} & \hat{\Phibf}_{\uy}\\
\Sbf_{\yx} & \Sbf_{\yu} & \Sbf_{\yy}
},
\end{align*}
which derives the system response. Since
\begin{align*}
&\mat{
zI - A & -B
}
\mat{
\hat{\Phibf}_{\xx} & \hat{\Phibf}_{\xy}\\
\hat{\Phibf}_{\ux} & \hat{\Phibf}_{\uy}
}
\mat{
zI - A \\ -C
}\\
=&
\mat{
zI - A & -B
}
\mat{I + \DeltaSLS_1 \\ \DeltaSLS_2}\\
=&
\mat{I & O}
\mat{zI - A \\ -C},
\end{align*}
we have $(zI - A)\DeltaSLS_1 = B \DeltaSLS_2$. Therefore,
\begin{align*}
& \mat{
zI - A & -B
}
\mat{
\Phibf_{\xx}(\DeltaSLS) & \Phibf_{\xy}(\DeltaSLS)\\
\Phibf_{\ux}(\DeltaSLS) & \Phibf_{\uy}(\DeltaSLS)
}\\
=&
\mat{
zI - A & -B
}
\mat{
I + \DeltaSLS_1 & O \\
\DeltaSLS_2 & I
}^{-1}
\mat{
\hat{\Phibf}_{\xx} & \hat{\Phibf}_{\xy}\\
\hat{\Phibf}_{\ux} & \hat{\Phibf}_{\uy}
}\\
=&
\mat{
zI - A & -B
}
\mat{
\hat{\Phibf}_{\xx} & \hat{\Phibf}_{\xy}\\
\hat{\Phibf}_{\ux} & \hat{\Phibf}_{\uy}
}
\end{align*}

Also, \eqn{robust-of-SLS-constraint} and $z\hat{\Phibf}_{\ux}, \hat{\Phibf}_{\uy} \in \RHinf$ imply
\begin{align*}
\hat{\Phibf}_{\ux}(zI-A)-\hat{\Phibf}_{\uy}C = \DeltaSLS_2,
\end{align*}
and hence $\DeltaSLS_2 \in \RHinf$. Along with $(I + \DeltaSLS_1)^{-1} \in \RHinf$, we know that $\{\Phibf_{\xx}(\DeltaSLS)$, $\Phibf_{\ux}(\DeltaSLS)$, $\Phibf_{\xy}(\DeltaSLS)$, $\Phibf_{\uy}(\DeltaSLS)\}$ satisfies \eqn{of-SLS-constraints} through the same proof in \cite{tseng2021realization}.

When $C$ is perturbed, we have $\Rtru = \Rnm + \Deltabf$ where
\begin{align*}
\Deltabf = \mat{
O & O & O\\
O & O & O\\
\Deltabf_C & O & O\\
}
\end{align*}
and the result follows from \lem{perturbed-S}.

Since we have shown $\Stru \in \RHinf$ if and only if $(I + \DeltaSLS_1)^{-1}$ above, the nominal controller $\hat{\Kbf}$ stabilizes the system under the same condition, and \cor{robust-of-SLS-controller-condition} follows.
\end{proof}

\section{Robust Output-Feedback SLS and IOP}\label{sec:robust_of_SLS}
\lem{R-S} and \lem{perturbed-S} allow us to easily derive robust results by extending the nominal results. As examples, we derive the following robust results for output-feedback SLS that generalize \cor{robust-of-SLS-condition} and a condition for robust IOP.

\begin{corollary}\label{cor:robust-of-SLS-general}
For output-feedback systems as in \fig{realization-output-feedback} with arbitrary $D$, let $\hatOFqple$ satisfy \eqn{of-SLS-constraints}. Consider an additive perturbation $\Deltabf$ on the plant such that
\begin{align*}
A(\Deltabf) = A + \Deltabf_A,\quad&\quad B(\Deltabf) = B + \Deltabf_B,\\
C(\Deltabf) = C + \Deltabf_C,\quad&\quad D(\Deltabf) = D + \Deltabf_D.
\end{align*}
where $\Deltabf_A, \Deltabf_B, \Deltabf_C, \Deltabf_D \in \RHinf$. Then the nominal controller
\begin{align*}
\hat{\Kbf} = \hat{\Kbf}_0 \left( I + D \hat{\Kbf}_0\right)^{-1},
\end{align*}
where $\hat{\Kbf}_0 = \hat{\Phibf}_{\uy} - \hat{\Phibf}_{\ux} \hat{\Phibf}_{\xx}^{-1} \hat{\Phibf}_{\xy}$,
internally stabilizes the perturbed system if and only if $\Psibf \in \RHinf$ where
\begin{align*}
\Psibf = \left( I - \mat{
\Deltabf_A & \Deltabf_B\\
\Deltabf_C & \Deltabf_D
}
\mat{
\hat{\Phibf}_{\xx} & \hat{\Phibf}_{\xy}\\
\hat{\Phibf}_{\ux} & \hat{\Phibf}_{\uy}
} \right)^{-1}.
\end{align*}
\end{corollary}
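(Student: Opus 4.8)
The plan is to treat the plant perturbation as an additive perturbation of the realization matrix of \fig{realization-output-feedback} and then invoke \lem{perturbed-S}, in the spirit of the proof of \cor{robust-of-SLS-condition} but now carrying the feedthrough term $D$ and all four plant perturbations. Replacing $A,B,C,D$ by $A+\Deltabf_A,\,B+\Deltabf_B,\,C+\Deltabf_C,\,D+\Deltabf_D$ in the block $(I-\Rbf)$ changes only the plant entries, so the perturbed realization is $\Rtru=\Rnm+\Deltabf$ with
\begin{align*}
\Deltabf=\mat{\Deltabf_A & \Deltabf_B & O\\ O & O & O\\ \Deltabf_C & \Deltabf_D & O},
\end{align*}
which lies in $\RHinf$ by assumption. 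The structural feature I would exploit later is that the $\ybf$-column of $\Deltabf$ is zero.

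First I would pin down the nominal stability matrix. By \lem{R-S} it is $\Snm=(I-\Rnm)^{-1}$, and I claim it takes the form
\begin{align*}
\Snm=\mat{\hat{\Phibf}_{\xx} & \Sbf_{\xu} & \hat{\Phibf}_{\xy}\\ \hat{\Phibf}_{\ux} & \Sbf_{\uu} & \hat{\Phibf}_{\uy}\\ \Sbf_{\yx} & \Sbf_{\yu} & \Sbf_{\yy}},
\end{align*}
i.e. the quadruple $\hatOFqple$ sits in the $(\xbf,\ubf)\times(\xbf,\ybf)$ positions, and $\Snm\in\RHinf$. This is the nominal ($\Deltabf=O$) instance of the statement, and it is the step I expect to be the main obstacle, because the quadruple only satisfies the $D=O$ constraints \eqn{of-SLS-constraints} while the realization carries $D\neq O$. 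Writing out $(I-\Rnm)\Snm=I$, the $\xbf$- and $\ybf$-rows reproduce \eqn{of-SLS-constraints-1} and fix $\Sbf_{\yx}=C\hat{\Phibf}_{\xx}+D\hat{\Phibf}_{\ux}$ and $\Sbf_{\yy}=I+C\hat{\Phibf}_{\xy}+D\hat{\Phibf}_{\uy}$, while the $\ubf$-row demands $\hat{\Phibf}_{\ux}=\hat{\Kbf}\Sbf_{\yx}$ and $\hat{\Phibf}_{\uy}=\hat{\Kbf}\Sbf_{\yy}$. The purpose of $\hat{\Kbf}=\hat{\Kbf}_0(I+D\hat{\Kbf}_0)^{-1}=(I+\hat{\Kbf}_0 D)^{-1}\hat{\Kbf}_0$ is precisely to absorb the feedthrough: substituting it and using the nominal identities $\hat{\Kbf}_0 C\hat{\Phibf}_{\xx}=\hat{\Phibf}_{\ux}$ and $\hat{\Kbf}_0(I+C\hat{\Phibf}_{\xy})=\hat{\Phibf}_{\uy}$ (the $D=O$ relations underlying \cor{robust-of-SLS-controller-condition}), both consistency equations collapse to identities via the push-through $(I+\hat{\Kbf}_0 D)^{-1}\hat{\Kbf}_0=\hat{\Kbf}_0(I+D\hat{\Kbf}_0)^{-1}$. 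I would carry out this feedthrough bookkeeping first.

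With $\Snm$ in hand I would apply \lem{perturbed-S} to get $\Stru=(I-\Snm\Deltabf)^{-1}\Snm$. Since $\Snm,\Deltabf\in\RHinf$ and $(I-\Snm\Deltabf)^{-1}=I+\Stru\Deltabf$, internal stability $\Stru\in\RHinf$ is equivalent to $(I-\Snm\Deltabf)^{-1}\in\RHinf$. Now the zero $\ybf$-column of $\Deltabf$ pays off: the $\ybf$-column of $\Snm\Deltabf$ vanishes, so $I-\Snm\Deltabf$ is block triangular with an identity in the $\ybf$-diagonal block, and its inverse lies in $\RHinf$ iff the inverse of its top-left $(\xbf,\ubf)\times(\xbf,\ubf)$ block does. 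A direct block multiplication shows this block equals
\begin{align*}
I-\mat{\hat{\Phibf}_{\xx} & \hat{\Phibf}_{\xy}\\ \hat{\Phibf}_{\ux} & \hat{\Phibf}_{\uy}}\mat{\Deltabf_A & \Deltabf_B\\ \Deltabf_C & \Deltabf_D},
\end{align*}
where the $\hat{\Phibf}_{\xy},\hat{\Phibf}_{\uy}$ entries surface because the nonzero rows of $\Deltabf$ are the $\xbf$- and $\ybf$-rows.

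Finally I would connect this to $\Psibf$. Setting $A=\mat{\hat{\Phibf}_{\xx} & \hat{\Phibf}_{\xy}\\ \hat{\Phibf}_{\ux} & \hat{\Phibf}_{\uy}}$ and $B=\mat{\Deltabf_A & \Deltabf_B\\ \Deltabf_C & \Deltabf_D}$, both in $\RHinf$, the top-left block is $I-AB$ while $\Psibf=(I-BA)^{-1}$. The push-through identities $(I-AB)^{-1}=I+A(I-BA)^{-1}B$ and $(I-BA)^{-1}=I+B(I-AB)^{-1}A$ give $(I-AB)^{-1}\in\RHinf \iff \Psibf\in\RHinf$, hence $\Stru\in\RHinf \iff \Psibf\in\RHinf$, which is the claim. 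The only genuinely new work beyond \cor{robust-of-SLS-condition} is the first step; the remaining steps are the same additive-perturbation and block-triangular arguments used there, plus a standard push-through.
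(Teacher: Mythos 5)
Your proof is correct and takes essentially the same route as the paper's: model the plant uncertainty as the additive realization perturbation $\Deltabf$ with zero $\ybf$-column, invoke \lem{perturbed-S}, and reduce robust stability to invertibility in $\RHinf$ of a $2\times 2$ block of the resulting block-triangular factor. The only cosmetic differences are that the paper works with $(I-\Deltabf\Snm)^{-1}$ instead of $(I-\Snm\Deltabf)^{-1}$, so its relevant block is $\Psibf^{-1}$ directly and no extra push-through identity is needed, and it outsources your nominal-feedthrough bookkeeping for $\hat{\Kbf}$ and $\Snm\in\RHinf$ to a citation of a corollary in \cite{tseng2021realization}.
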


\begin{proof}
Since $\hatOFqple$ satisfies \eqn{of-SLS-constraints}, we have the nominal stability $\Snm \in \RHinf$ and the nominal controller $\hat{\Kbf}$ is given by \cite[Corollary 5]{tseng2021realization}.

Given the realization in \fig{realization-output-feedback}, the perturbation $\Deltabf$ can be expressed as
\begin{align*}
\Deltabf = \mat{
\Deltabf_A & \Deltabf_B & O\\
O & O & O\\
\Deltabf_C & \Deltabf_D & O
}.
\end{align*}
Since $\Snm \in \RHinf$, \lem{perturbed-S} requires $(I-\Deltabf\Snm)^{-1} \in \RHinf$, or the inverse of
\begin{align}
I - \mat{
\Deltabf_A & \Deltabf_B & O\\
O & O & O\\
\Deltabf_C & \Deltabf_D & O
}
\mat{
\hat{\Phibf}_{\xx} & \Snm_{\xu} & \hat{\Phibf}_{\xy}\\
\hat{\Phibf}_{\ux} & \Snm_{\uu} & \hat{\Phibf}_{\uy}\\
\Snm_{\yx} & \Snm_{\yu} & \Snm_{\yy}
}
\label{eqn:robust-of-SLS-condition}
\end{align}
should be in $\RHinf$. Computing the inverse of \eqn{robust-of-SLS-condition} is equivalent to computing
\begin{align*}
\mat{
\Psibf & -\Psibf
\mat{
\Deltabf_A & \Deltabf_B\\
\Deltabf_C & \Deltabf_D
}
\mat{\Snm_{\xu}\\ \Snm_{\uu}}\\
O & I
}.
\end{align*}
We know $\Deltabf_A, \Deltabf_B, \Deltabf_C, \Deltabf_D, \Snm_{\xu}, \Snm_{\uu} \in \RHinf$.
Therefore, $\hat{\Kbf}$ still internally stabilizes the perturbed plant if and only if $\Psibf \in \RHinf$, which concludes the proof.
\end{proof}

We can extend \cor{robust-of-SLS-general} to provide an SLS version of \cor{robust-IOP}.

\begin{corollary}
For output-feedback systems $\Gbf(\Deltabf)$ as in \fig{realization-output-feedback} perturbed by structured $\Deltabf$ as in \cor{robust-of-SLS-general}, let $\Kcal_\epsilon$ be the set of robustly stabilizing controllers defined by
\begin{align*}
\Kcal_\epsilon = \{ \Kbf : \Kbf \text{ internally stabilizes } \Gbf(\Deltabf), \forall \Deltabf \in \Dcal_\epsilon \}
\end{align*}
where
\begin{align*}
\Dcal_\epsilon = \{
\Deltabf \text{ structured as in \cor{robust-of-SLS-general}} :
\norm{\Deltabf}_{\infty} < \epsilon
\}.
\end{align*}
Then $\Kcal_\epsilon$ is parameterized by $\hatOFqple$ that satisfies \eqn{of-SLS-constraints} and
\begin{align*}
\norm{
\mat{
\hat{\Phibf}_{\xx} & \hat{\Phibf}_{\xy}\\
\hat{\Phibf}_{\ux} & \hat{\Phibf}_{\uy}
}
}_{\infty} \leq \epsilon^{-1}.
\end{align*}
\end{corollary}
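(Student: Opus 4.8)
The plan is to reduce the robust stabilization requirement to a single small-gain condition on the parameterizing matrix, exactly mirroring the proof of \cor{robust-IOP}. The starting point is \cor{robust-of-SLS-general}: since $\hatOFqple$ satisfies \eqn{of-SLS-constraints} we have $\Snm \in \RHinf$, and for any \emph{fixed} structured $\Deltabf \in \Dcal_\epsilon$ the associated nominal controller internally stabilizes $\Gbf(\Deltabf)$ if and only if $\Psibf = (I - \bar{\Deltabf}\Phibf)^{-1} \in \RHinf$, where
\begin{align*}
\bar{\Deltabf} = \mat{\Deltabf_A & \Deltabf_B\\ \Deltabf_C & \Deltabf_D}, \quad \Phibf = \mat{\hat{\Phibf}_{\xx} & \hat{\Phibf}_{\xy}\\ \hat{\Phibf}_{\ux} & \hat{\Phibf}_{\uy}}.
\end{align*}
Robust stabilization over all of $\Dcal_\epsilon$ is therefore equivalent to requiring $(I - \bar{\Deltabf}\Phibf)^{-1} \in \RHinf$ for every $\bar{\Deltabf}$ with $\norm{\bar{\Deltabf}}_{\infty} < \epsilon$.

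Next I would observe that the embedding of $\bar{\Deltabf}$ into the full realization-level perturbation $\Deltabf$ only inserts a zero control-row and a zero $\ybf$-column, so $\norm{\Deltabf}_{\infty} = \norm{\bar{\Deltabf}}_{\infty}$, and as the four blocks $\Deltabf_A, \Deltabf_B, \Deltabf_C, \Deltabf_D$ range freely over $\RHinf$ the reduced block $\bar{\Deltabf}$ sweeps the entire $\RHinf$-ball of radius $\epsilon$. Both $\bar{\Deltabf}$ and $\Phibf$ lie in $\RHinf$ (the latter because \eqn{of-SLS-constraints-3} places every block of $\Phibf$ in $\RHinf$), so the small gain theorem applies as in \cor{robust-IOP}: $(I - \bar{\Deltabf}\Phibf)^{-1} \in \RHinf$ for all $\norm{\bar{\Deltabf}}_{\infty} < \epsilon$ if and only if $\norm{\Phibf}_{\infty} \leq \epsilon^{-1}$. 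Chaining this with the per-perturbation equivalence from \cor{robust-of-SLS-general} gives the claimed parameterization of $\Kcal_\epsilon$.

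The main obstacle is the necessity (``only if'') direction, i.e., the converse of the small gain theorem. Sufficiency is immediate, since $\norm{\bar{\Deltabf}\Phibf}_{\infty} \leq \norm{\bar{\Deltabf}}_{\infty}\norm{\Phibf}_{\infty} < 1$ forces invertibility of $I - \bar{\Deltabf}\Phibf$ in $\RHinf$. For necessity I must show that whenever $\norm{\Phibf}_{\infty} > \epsilon^{-1}$ there is a destabilizing $\bar{\Deltabf}$ of norm below $\epsilon$; this is precisely where it matters that $\bar{\Deltabf}$ is a \emph{full} (unstructured) block with mutually independent $A$, $B$, $C$, $D$ perturbations, so that the standard frequency-wise rank-one construction underlying the converse small gain theorem is admissible. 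Confirming that this block pattern imposes no cross-coupling that would obstruct the construction is the crux; once that is settled, the argument closes exactly as in \cor{robust-IOP}.
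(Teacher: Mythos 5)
Your proposal is correct and follows the same route as the paper: reduce to the condition $\Psibf \in \RHinf$ via \cor{robust-of-SLS-general} and then apply the small gain theorem to the $\RHinf$-ball of structured perturbations, exactly as in \cor{robust-IOP}. You actually supply more detail than the paper does—in particular the norm-preserving embedding of the $2\times 2$ perturbation block into the realization-level $\Deltabf$ and the role of the full, unstructured block in the converse small-gain direction—both of which the paper leaves implicit.
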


\begin{proof}
Similar to the proof of \cor{robust-IOP}, we know $\Deltabf \in \RHinf$ as its norm is bounded. By the small gain theorem and \cor{robust-of-SLS-general}, $\Psibf \in \RHinf$ for all $\Deltabf \in \Dcal_\epsilon$ if and only if
\begin{align*}
\norm{
\mat{
\hat{\Phibf}_{\xx} & \hat{\Phibf}_{\xy}\\
\hat{\Phibf}_{\ux} & \hat{\Phibf}_{\uy}
}
}_{\infty} \leq \epsilon^{-1},
\end{align*}
which concludes the proof.
\end{proof}

Similar to \cor{robust-sf-SLS}, \ref{cor:robust-of-SLS-controller-condition}, and \ref{cor:robust-of-SLS-general}, we can derive the following condition for the nominal IOP controller that still stabilizes a perturbed plant using \lem{perturbed-S}. The proof is mostly the same as the proof of \cor{robust-IOP} and omitted.

\begin{corollary}
For the realization in \fig{realization-G-K}, let $\{\hat{\Ybf}, \hat{\Wbf}, \hat{\Ubf}, \hat{\Zbf} \}$ satisfy \eqn{IOP-constraints}. Consider an additive perturbation $\DeltaG \in \RHinf$ on the plant such that $\Gbf(\Deltabf_{\Gbf}) = \Gbf + \DeltaG$. Then the nominal controller
\begin{align*}
\hat{\Kbf} = \hat{\Ubf} \hat{\Ybf}^{-1}
\end{align*}
internally stabilizes the perturbed system if and only if
\begin{align*}
(I - \DeltaG \hat{\Ubf})^{-1} \in \RHinf.
\end{align*}
\end{corollary}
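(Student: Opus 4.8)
The plan is to mirror the proof of \cor{robust-IOP}, treating the plant perturbation as an additive perturbation of the realization and invoking \lem{perturbed-S}. First I would set
\begin{align*}
\Snm = \mat{\hat{\Ybf} & \hat{\Wbf}\\ \hat{\Ubf} & \hat{\Zbf}}, \qquad \Deltabf = \mat{O & \DeltaG\\ O & O},
\end{align*}
so that the perturbed plant $\Gbf + \DeltaG$ is captured by the nominal realization of \fig{realization-G-K} perturbed additively by $\Deltabf$. Since $\{\hat{\Ybf}, \hat{\Wbf}, \hat{\Ubf}, \hat{\Zbf}\}$ satisfy \eqn{IOP-constraints}, they form a valid nominal stability matrix $\Snm \in \RHinf$ whose accompanying controller is exactly $\hat{\Kbf} = \hat{\Ubf}\hat{\Ybf}^{-1}$. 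By \lem{perturbed-S}, the perturbed stability is $\Stru = \Snm(I - \Deltabf\Snm)^{-1}$, and the nominal controller internally stabilizes the perturbed system if and only if $\Stru \in \RHinf$.

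Next I would exploit the block structure. Because $\Deltabf$ has only a single nonzero block, $I - \Deltabf\Snm$ is block upper-triangular,
\begin{align*}
I - \Deltabf\Snm = \mat{I - \DeltaG\hat{\Ubf} & -\DeltaG\hat{\Zbf}\\ O & I},
\end{align*}
so its inverse exists precisely when $I - \DeltaG\hat{\Ubf}$ is invertible, in which case the top-left block of $(I - \Deltabf\Snm)^{-1}$ is $(I - \DeltaG\hat{\Ubf})^{-1}$ and the only other nontrivial block is $(I - \DeltaG\hat{\Ubf})^{-1}\DeltaG\hat{\Zbf}$. This localizes the entire stability question to the single factor $(I - \DeltaG\hat{\Ubf})^{-1}$.

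For the ``if'' direction, assuming $(I - \DeltaG\hat{\Ubf})^{-1} \in \RHinf$ and using $\DeltaG, \hat{\Zbf} \in \RHinf$, the off-diagonal block is also in $\RHinf$, so $(I - \Deltabf\Snm)^{-1} \in \RHinf$ and hence $\Stru = \Snm(I - \Deltabf\Snm)^{-1} \in \RHinf$. The ``only if'' direction is where I expect the main obstacle: one cannot simply left-multiply $\Stru$ by $\Snm^{-1} = I - \Rnm$ to recover $(I - \Deltabf\Snm)^{-1}$, because $I - \Rnm$ contains the possibly-unstable plant $\Gbf$ and need not lie in $\RHinf$. The clean way around this is the algebraic identity $(I - \Deltabf\Snm)^{-1} = I + \Deltabf\Stru$, which follows directly from $\Stru = \Snm(I - \Deltabf\Snm)^{-1}$. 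Since $\DeltaG \in \RHinf$ gives $\Deltabf \in \RHinf$, internal stability $\Stru \in \RHinf$ then forces $(I - \Deltabf\Snm)^{-1} \in \RHinf$, whose top-left block $(I - \DeltaG\hat{\Ubf})^{-1}$ must therefore be in $\RHinf$. Unlike \cor{robust-IOP}, here $\DeltaG$ is a single fixed perturbation rather than ranging over a set $\Dcal_\epsilon$, so I would stop at this direct condition and not invoke the small gain theorem to convert it into a norm bound.
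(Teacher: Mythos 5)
Your proposal is correct and follows exactly the route the paper intends: it omits this proof and defers to the proof of \cor{robust-IOP}, which is precisely the argument you reproduce (same $\Snm$, same structured $\Deltabf$, same block-triangular inverse of $I - \Deltabf\Snm$, with the small-gain step dropped because $\DeltaG$ is a single fixed perturbation rather than a ball). Your handling of the ``only if'' direction via the identity $(I - \Deltabf\Snm)^{-1} = I + \Deltabf\Stru$ is in fact slightly more careful than the paper's own sketch in \cor{robust-IOP}, which asserts the need for $(I - \Deltabf\Snm)^{-1} \in \RHinf$ without spelling out why $\Stru \in \RHinf$ forces it; this is a welcome tightening, not a deviation.
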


\section{Discussion}\label{sec:discussion}

Although the conditions derived from \lem{R-S} and \lem{perturbed-S} in this work can unify a large portion of robust controller synthesis results in the literature, there are still results beyond its scope. In particular, the conditions rely on some other procedure to ensure the perturbed stability matrix is still in $\RHinf$ for the whole uncertainty set $\Dcal$, or condition \eqn{robust-S-general}.
Different $\Dcal$ may invoke different theorems. For example, while the small gain theorem works for ball-like uncertainty set, there is a line of research on bounded perturbation of transfer function coefficients, which builds upon the Kharitonov's Theorem \cite{kharitonov1978asymptotic}.
Kharitonov's Theorem suggests that robust stability over the whole uncertainty set $\Dcal$ can be achieved by the stability of $4$ elements within $\Dcal$. It is leveraged by \cite{bernstein1990robust,bernstein1992robust,shafai1992robust} to synthesize robust controllers and further generalized by \cite{chapellat1989generalization,barmish1988generalization} for larger classes of coefficient-perturbed transfer functions.

In contrast to the realization-centric perspective adopted in this work, there are also alternative approaches for system analysis using the integral quadratic constraints \cite{megretski1997system} or interval analysis \cite{patre2010robust}. Those methods also derive robust results, while we argue that the realization-centric perspective is more straightforward and the proofs are much simpler.

There is still a plenty of robust results that could be derived or generalized from \lem{R-S} and \lem{perturbed-S} but not included here, such as \cite[Proposition 1]{chen2019robust}, \cite[Lemma 2]{tsiamis2020sample}, \cite[Theorem III.5]{matni2020robust}. We encourage the reader to explore those diverse results and unify them under this paper's approach.

\section{Conclusion}\label{sec:conclusion}

We derived sufficient and necessary robust stability conditions from the realization-stability lemma and established the general robust controller synthesis problem. Several existing robust results can be derived from the conditions, including $\mu$-synthesis, robust primal-dual Youla parameterization, robust IOP, and robust SLS. Further, we demonstrate how to easily obtain new robust results for output-feedback SLS and IOP from the conditions. Along with some discussions about this work's scope and how it complements/facilitates other existing work on robust controller synthesis, we present a unified approach to analyze and synthesize robust systems.

\bibliographystyle{IEEEtran}
\bibliography{Test}

\end{document}